\newcommand{\R}{\mathbb{R}}
\newcommand{\N}{\mathbb{N}}
\newcommand{\Z}{\mathbb{Z}}
\newcommand{\p}{\mathbb{P}}
\newcommand{\E}{\mathbb{E}}
\newcommand{\var}{\mathrm{var}}
\newcommand{\e}{\mathrm{e}}
\newcommand{\dd}{\mathrm{d}}
\newcommand{\1}{\mathbbm{1}}
\newcommand{\im}{\mathrm{i}}
\newcommand{\V}{\mathcal{V}}
\newcommand{\U}{\mathcal{U}}
\renewcommand{\phi}{\varphi}
\newcommand{\tphi}{\widetilde{\varphi}}
\renewcommand{\bar}{\overline}
\newcommand{\Ric}{\text{Ric}}
  \numberwithin{equation}{section} 
\def\paragraph{\@startsection{paragraph}{4}%
  \z@\z@{-\fontdimen2\font}%
  {\normalfont\itshape}}
\renewcommand\subsection{\@startsection{subsection}{2}%
  \z@{-.5\linespacing\@plus-.7\linespacing}{.5\linespacing}%
  {\normalfont\scshape}}
\def\subsubsection{\@startsection{subsubsection}{3}%
  \z@\z@{-\fontdimen2\font}%
  {\normalfont\scshape}}
\theoremstyle{definition}
\newtheorem{defi}{Definition}[section]
\newtheorem{rmk}[defi]{Remark}
\newtheorem{lemma}[defi]{Lemma}
\theoremstyle{plain}
\newtheorem{thm}[defi]{Theorem}
\newtheorem{prop}[defi]{Proposition}
\newtheorem{cor}[defi]{Corollary}
\title{The discrete Gaussian free field on a compact manifold}
\author[A.~Cipriani]{Alessandra Cipriani}
\author[B.~van Ginkel]{Bart van Ginkel}
\address{TU Delft (DIAM), Building 28, van Mourik Broekmanweg 6, 2628 XE, Delft, The Netherlands}
\email{A.Cipriani@tudelft.nl, G.J.vanGinkel@tudelft.nl}
\thanks{The authors thank Mario Ayala Valenzuela, Nathana\"el Berestycki, Rajat Subhra Hazra and Frank Redig for helpful discussions. The support of the grants 613.009.102 and 613.009.112 of the Netherlands Organisation for Scientific Research (NWO) is gratefully acknowledged. The authors also would like to thank an anonymous referee for insightful comments and remarks on a previous version of the paper.
}
\date{\today}
\begin{document}
\maketitle


\begin{abstract}
    In this article we define the discrete Gaussian free field (DGFF) on a compact manifold. Since there is no canonical grid approximation of a manifold, we construct a random graph that suitably replaces the square lattice $\Z^d$ in Euclidean space, and prove that the scaling limit of the DGFF is given by the manifold continuum Gaussian free field (GFF). Furthermore using Voronoi tessellations we can interpret the DGFF as element of a Sobolev space and show convergence to the GFF in law with respect to the strong Sobolev topology.
\end{abstract}

\section{Introduction and main results}
The discrete Gaussian free field has received a lot of attention over the last years thanks to its connections with several areas of mathematics. An on-the-fly definition of it can be given by means of a multivariate centered Gaussian variable on a finite graph, whose covariance matrix is the inverse of the graph laplacian. The DGFF is considered the discrete version of a random distribution, the Gaussian free field, and the interplay between the two has been highlighted in the mathematics literature starting with the work of \cite{Sheffield:2007}. As far as the authors know, the DGFF has been considered mainly on lattices due to the reason that, outside of the Euclidean setting, it is difficult to choose a canonical grid that approximates space (see the question on~\cite{Stackex}). If one wants to construct the DGFF on a Riemannian manifold for example, one possible strategy to define it is to begin directly with the GFF on the manifold, then construct a triangulation of the space and project the GFF on test functions that are affine on triangles. This procedure is originally contained in~\cite{Schramm/Sheffield:2013}. The drawback of this construction is that it does not link the DGFF to a metrized graph, in particular \added[id=ale]{it does not give information on the edge weights that the underlying graph should have. We, on the other hand, start by setting the edge weights and from there constructing the DGFF.} {Another approximation of the GFF is obtained via a truncation of its Wiener series representation in terms of eigenfunctions of the Laplace--Beltrami operator, as done in~\cite{Riv17}. This approach is analytical and does not yield a DGFF, which is the object we want to use to discretize the GFF.}
\paragraph{Contributions of this article} Indeed, the goal of our work is to approximate the GFF on a manifold by an appropriately defined DGFF. The main difficulty here is to create the right setting in which to make the necessary constructions. In particular this means finding a graph on which we can define the DGFF. However, grids on manifolds are in general far from regular (translation invariance and scaling properties are usually lacking). Studying the analogous proofs in $\R^d$ shows that these are key ingredients used in the scaling limit. Another object which plays a crucial role in the Euclidean case is the Green's function, for which one needs pointwise convergence to the continuum Green's function and upper bounds. These are however not available for our weighted graphs. Therefore we aimed at, and succeeded in, finding different conditions under which Green's functions converge in a weaker way, but still strong enough to ensure the scaling limit. These assumptions (as listed in Theorem~\ref{thm:1} and as elaborated upon shortly) are natural, in the sense that a grid uniformly sampled  from the manifold exhibits these properties with probability one (see Theorem~\ref{thm:2}). In contrast to the $\R^d$ case, showing this relies on a result on the spectral convergence of graph laplacians, which is found in the literature of spectral clustering.

Our construction starts by considering a sequence of weighted graphs for which the random walk semigroups converge in the sense described in Theorem~\ref{thm:1}. Another quite natural assumption we make is that the grid approximates the manifold in the sense of measures, that is, the empirical measure on the grid points converges to the uniform measure on the manifold. Given this, we have to add one final ingredient to the picture: a uniform bound from below on the spectral gap of the discrete laplacians. The reason behind this condition is that one wishes to stay in the region of the spectrum away from zero, where the graph laplacian is invertible. 

We can now begin by giving the mathematical exposition of our results. Throughout we will be working with a connected and compact Riemannian manifold $M$ of dimension $d\ge 1$ with normalized volume measure $\overline V$.
We will use the space of smooth and zero-mean test functions, that is to say the set
\[
W:=\left\{f\in C^\infty(M):\,\int_M f\dd \overline V=0\right\}.
\]

For a graph $\V$ with positive symmetric edge weights $c_{vw}$ we define the graph laplacian acting on functions $f:\V\rightarrow \R$ as
\begin{equation}\label{eq:graph_Lapl}
    Lf(v):=-\sum_{w\in \V} c_{vw} (f(w)-f(v)), \hspace{0.5cm} v\in \V.
\end{equation}
The laplacian $L$ generates a simple random walk on $\V$ with associated semigroup $(S_t^\V)_{t\ge 0}$.
We define the zero-average discrete Gaussian free field $\phi_{\V}$ on $\V$ as the Gaussian field indexed by $\V$ whose covariance function is the inverse of $L$ (for proper definitions see Subsection~\ref{sec:DGFF}).

The first Theorem we present is concerned with the convergence of the zero-average DGFF to its continuum counterpart: the Gaussian free field on $M$, that is, the generalized Gaussian field $\phi$ with mean zero and covariance matrix $G$, the Green's function of the Laplace--Beltrami operator on $M$ (these notions will be specified in Section~\ref{sec:overview}). While the first two conditions in the Theorem specify how to choose a suitable graph laplacian approximating the Laplace--Beltrami operator, the third one regards the dispersion of the grid points.
\begin{thm}\label{thm:1}
Let a sequence of graphs $(V_N)_{N\in\N}$ be given such that $V_N=(p_i^N)_{i=1}^N$ and set the graph laplacians $L=L_N$ as in \eqref{eq:graph_Lapl}. Define $\phi_N$ to be the zero-average DGFF on $V_N$. Assume that the following conditions hold.
\begin{enumerate}[ref=(\arabic*)]
    \item\label{as:gap} Denoting by $\lambda_2^N$ the spectral gap of $L_N$, we require $\inf_N \lambda_2^N>0. $
    \item\label{as:semigroup} For any $f:\,M\to \R$, set $$f_N:=f|_{V_N}-\frac{1}{N}\sum_{i=1}^Nf|_{V_N}(p_i^N).$$
    and assume that for all $f\in W$ and $t\ge 0$
    \begin{equation}\label{eq:semigroup_conv}
    \lim_{N\to\infty}\frac{1}{N}(f_N,S_t^N f_N) = (f,\,S_t f),
\end{equation}
where $(S_t)_{t\ge 0}$ is the heat semigroup of the Laplace-Beltrami operator.
\item\label{as:empirical} The following weak limit of measures holds:
\[
\lim_{N\to\infty}\frac{1}{N}\sum_{i=1}^N\delta_{p_i^N}=\overline V,
\]
where $\overline V$ is the uniform measure on $M$.
\end{enumerate}
Then $\sqrt N\phi_N$ converges to $\phi$ in law in the space $W'$ equipped with the weak* topology.
\end{thm}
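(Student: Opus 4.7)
The plan is to proceed in three stages: (i) interpret $\sqrt N\phi_N$ as a random element of $W'$ by pairing with the empirical measure, (ii) establish pointwise convergence of covariance forms on $W\times W$ via the semigroup representation of the Green's function, and (iii) upgrade this to convergence in law in the weak-* topology using the Gaussianity of both sides.

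For (i), I define $(\sqrt N\phi_N)(f):=N^{-1/2}\sum_{i=1}^N\phi_N(p_i^N)f(p_i^N)$ for $f\in W$. Since $\phi_N$ is zero-mean on $V_N$, this coincides with $N^{-1/2}\sum_i\phi_N(p_i^N)f_N(p_i^N)$ and is centred Gaussian with
\[
\Cov\bigl((\sqrt N\phi_N)(f),(\sqrt N\phi_N)(g)\bigr)=\frac{1}{N}(f_N,G_Ng_N),
\]
where $G_N$ is the pseudoinverse of $L_N$, equal to $L_N^{-1}$ on zero-mean functions. The target $\phi$ is centred Gaussian with covariance form $(f,Gg)$, where $G$ is the Green's function of the Laplace--Beltrami operator on $M$.

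For (ii), the uniform spectral gap $\lambda_0:=\inf_N\lambda_2^N>0$ from assumption \eqref{as:gap} ensures the representation $G_Nu=\int_0^\infty S_t^Nu\,\dd t$ on zero-mean $u$, so
\[
\frac{1}{N}(f_N,G_Ng_N)=\int_0^\infty\frac{1}{N}(f_N,S_t^Ng_N)\,\dd t.
\]
Assumption \eqref{as:semigroup} provides pointwise-in-$t$ convergence of the integrand to $(f,S_tg)$. For dominated convergence, Cauchy--Schwarz and the spectral gap give the bound $|N^{-1}(f_N,S_t^Ng_N)|\le e^{-\lambda_0 t}\,N^{-1}\|f_N\|_2\|g_N\|_2$, while assumption \eqref{as:empirical} together with smoothness of $f,g$ implies $N^{-1}\|f_N\|_2^2\to\|f\|_{L^2(M,\bar V)}^2$, so that the prefactor is uniformly bounded in $N$. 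Dominated convergence then yields $N^{-1}(f_N,G_Ng_N)\to(f,Gg)$.

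For (iii), Gaussianity turns this convergence of covariances into pointwise convergence of characteristic functionals on $W$, and hence convergence of all finite-dimensional distributions. Since $W$ is a nuclear Fréchet space (as a closed subspace of $C^\infty(M)$), the Minlos--Sazonov theorem promotes pointwise convergence of such characteristic functionals to weak convergence of laws on $W'$ equipped with the weak-* topology. The main obstacle is the dominated-convergence step in (ii): all three assumptions are needed simultaneously, with \eqref{as:gap} supplying the time decay for an integrable dominant, \eqref{as:empirical} controlling the $L^2$-prefactors uniformly in $N$, and \eqref{as:semigroup} providing the pointwise limit; removing any one of them would break the argument, which explains the minimality of the hypotheses.
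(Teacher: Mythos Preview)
Your proof is correct and follows essentially the same route as the paper: reduce to showing $N^{-1}(f_N,G_Nf_N)\to(f,Gf)$ via the characteristic functional on the nuclear space $W$, express $G_N$ through the time integral of $S_t^N$, and pass to the limit by dominated convergence using the spectral-gap bound from Assumption~\ref{as:gap}, the $L^2$ control from Assumption~\ref{as:empirical}, and the pointwise semigroup limit from Assumption~\ref{as:semigroup}. Two cosmetic remarks: the continuity theorem you invoke is the L\'evy-type result for nuclear spaces (the paper cites Meyer) rather than Minlos--Sazonov proper, and your bilinear version of Assumption~\ref{as:semigroup} for $f\neq g$ follows by polarization since $S_t^N$ is self-adjoint---though for the characteristic functional only the diagonal $f=g$ is actually needed.
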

We will show (see Remark~\ref{rmk:torus}) that canonical grids in flat space satisfy the above mentioned assumptions, for example the equally spaced grid on the $d$-dimensional flat torus $\mathbb T^d$. 
\begin{rmk}
It will follow from our proofs that we do not necessarily have to work with the Laplace--Beltrami operator. In general, two properties are essential: first of all the operator needs to be symmetric and positive semi-definite. This ensures that we can use its (possibly generalized) inverse as covariance of a Gaussian field, as we are going to do in Section~\ref{sec:overview}. Further the operator must generate a suitably regular semigroup for our approach to work. Then if we have a sequence of discrete approximations of this operator in the sense of Theorem~\ref{thm:1} with the analogous properties, we get convergence of the corresponding Gaussian fields.
\end{rmk}

The second Theorem exhibits an example of a graph satisfying Assumptions~\ref{as:gap}-\ref{as:empirical}. As it often happens in statistics and manifold learning (\cite{singer2006graph, Belkin/Niyogi:2006,H/A/VL:2005,Gine/Koltchinskii:2006} are only a few of the numerous works on the topic), the points $(p_i^N)$ of the grid are obtained as uniform observations of the manifold, and edges between them are weighted by a semi-positive kernel with
bandwidth $t$ applied to the distance between those grid points. As the number of observations grows and the bandwidth goes to zero, one should be able to capture the convergence of the graph laplacian to the continuum one, and in turn the scaling limit of the random field. Concretely, we sample points uniformly from $\overline V$ and we define the vertex set of the $N^\text{th}$ grid to be the first $N$ points. We connect any two vertices with an edge and choose our kernel to be the heat kernel $p_t(\cdot,\,\cdot)$ on $M$ divided by $t$ (the more precise definitions are in Subsection~\ref{subsec:manifold_as}). Given the sequence of grids we set a bandwidth $t$ that satisfies
\begin{equation}\label{eq:Wass_bound}
W_1(\mu^N,\,\overline V)=o\left(t^{\frac{d}{2}+2}\right),
\end{equation}
where $W_1$ denotes the Kantorovich or $1$-Wasserstein metric and $\mu^N$ is the empirical measure on $(p_i^N)_{i=1}^N.$
Finally, we modify the bandwidth so that it goes to $0$ slowly enough to get convergence of the spectral gaps of the graph laplacians to the continuum one (see Subsection~\ref{subsubsec:as2holds} for the details). We formulate the result in the following Theorem.
\begin{thm}\label{thm:2}
Let $V_N:=(p_i^N)_{i=1}^N$ be a sequence of i.i.d. points sampled from the normalized volume measure on $M$. Let $p_t(\cdot,\,\cdot)$ be the heat kernel on $M$. Choose $t_N$ such that~\eqref{eq:Wass_bound} holds and the spectral gaps converge to the continuum one. Define the weights in~\eqref{eq:graph_Lapl} as
\[
c_{vw}:=\frac{p_{t_N}(v,\,w)}{Nt_N},\quad v,\,w\in V_N.
\] 
Then Assumptions~\ref{as:gap}-\ref{as:empirical} are satisfied almost surely in the law of the sampled grid points.
\end{thm}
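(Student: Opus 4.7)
The plan is to verify Assumptions~\ref{as:gap}--\ref{as:empirical} in order of increasing difficulty. Assumption~\ref{as:empirical} is immediate from Varadarajan's theorem: the empirical measure of i.i.d.\ samples from a Borel probability measure on a compact metric space converges weakly to the sampling measure almost surely. Assumption~\ref{as:gap} is essentially built into the hypotheses, since $t_N$ is chosen so that the discrete spectral gaps $\lambda_2^N$ converge to the spectral gap of the Laplace--Beltrami operator on $M$, which is strictly positive by the compactness and connectedness of $M$. The existence of such a bandwidth relies on the spectral convergence theory for kernel graph laplacians developed in the manifold-learning literature, for instance in~\cite{Belkin/Niyogi:2006,Gine/Koltchinskii:2006}.

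The heart of the proof is Assumption~\ref{as:semigroup}. The strategy is to split $\frac{1}{N}(f_N,S_t^N f_N)-(f,S_t f)$ into a \emph{discretization} error, measuring how well $L_N$ approximates $-\Delta_M$ when tested against smooth functions, and a \emph{sampling} error, measuring the replacement of integrals against $\overline V$ by empirical averages over the $N$ sampled points. For smooth $f\in W$ one computes, using the symmetry of the Dirichlet form,
\[
\frac{1}{N}(f_N, L_N f_N) = \frac{1}{2 N^2 t_N}\sum_{i,j=1}^N p_{t_N}(p_i^N, p_j^N)\bigl(f(p_i^N) - f(p_j^N)\bigr)^2,
\]
whose deterministic counterpart $\frac{1}{2t_N}\iint_{M\times M}p_{t_N}(x,y)(f(x)-f(y))^2\dd\overline V(x)\dd\overline V(y)$ converges to the Dirichlet form $(f,-\Delta_M f)$ by the standard heat-kernel small-time expansion. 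The Wasserstein condition~\eqref{eq:Wass_bound} is calibrated to dominate the $t_N^{-d/2-1}$-type growth of the Lipschitz constant of $p_{t_N}$ that governs the sampling error. Iterating this argument yields convergence of every discrete moment $\frac{1}{N}(f_N, L_N^k f_N)\to (f,(-\Delta_M)^k f)$; semigroup convergence then follows by Taylor-expanding $\e^{-tL_N}$ and truncating, using Assumption~\ref{as:gap} together with uniform bounds on the relevant operator norms to control the remainder.

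The main obstacle is the delicate balance inside the discretization step: the bandwidth $t_N$ must be coarse enough that the Monte-Carlo fluctuations from only $N$ sampled points are negligible, yet fine enough that the heat-kernel-averaged operator is close to $-\Delta_M$. The Wasserstein condition~\eqref{eq:Wass_bound}, combined with the prescribed convergence of the spectral gaps, is precisely what reconciles these two opposing demands and allows one to pass to the limit jointly.
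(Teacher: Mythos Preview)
Your treatment of Assumptions~\ref{as:gap} and~\ref{as:empirical} is fine and matches the paper in spirit: Assumption~\ref{as:empirical} does follow from Varadarajan (the paper actually proves the stronger almost-sure Kantorovich convergence, which is what makes the hypothesis~\eqref{eq:Wass_bound} non-vacuous, but weak convergence alone already gives~\ref{as:empirical}); and Assumption~\ref{as:gap} is indeed essentially a hypothesis once $t_N$ is chosen so that $\lambda_2^N\to\lambda_2>0$.

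The real difference, and the real problem, is in your route to Assumption~\ref{as:semigroup}. You propose to (i) prove $\frac{1}{N}(f_N,L_N f_N)\to(f,-\Delta_M f)$ via the Dirichlet form and the Wasserstein bound, (ii) ``iterate'' to obtain $\frac{1}{N}(f_N,L_N^k f_N)\to (f,(-\Delta_M)^k f)$ for every $k$, and (iii) reassemble $\e^{-tL_N}$ by its Taylor series. Each of steps (ii) and (iii) has a genuine gap.

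For (ii): convergence of the quadratic form $(f_N,L_N f_N)$ does \emph{not} iterate. The second moment is $\frac{1}{N}\|L_N f_N\|_{2,N}^2$, and to control it you need $L_N f_N$ to be close to $\bigl((-\Delta_M)f\bigr)|_{V_N}$ in $\ell^2(V_N)$, not merely that the associated energies agree. Worse, $L_N f_N$ is a function on $V_N$ that is not the restriction of a fixed smooth function on $M$, so the Wasserstein/Lipschitz argument you used at level $k=1$ no longer applies verbatim at level $k=2$. What one actually needs here is exactly what the paper proves as its key intermediate step: the \emph{uniform} generator convergence
\[
\bigl\|L_N f|_{V_N}-\bigl((-\Delta_M)f\bigr)|_{V_N}\bigr\|_{\infty,N}\longrightarrow 0,
\]
obtained by splitting $-L_N f(p_i)=\int g^{t_N,i}\dd\overline V+\int g^{t_N,i}\dd(\mu^N-\overline V)$ and using~\eqref{eq:Wass_bound} together with sup- and Lipschitz-bounds on the heat kernel.

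For (iii): the Taylor reassembly fails for two reasons. First, on the continuum side the identity $(f,S_t f)=\sum_{k\ge 0}\frac{(-t)^k}{k!}(f,(-\Delta_M)^k f)$ is valid only for analytic vectors of $-\Delta_M$; a generic $f\in W$ is smooth but not analytic, and the moment series $\sum_k \frac{t^k}{k!}(f,(-\Delta_M)^k f)=\sum_j \e^{t\lambda_j}|(f,e_j)|^2$ may diverge. Second, on the discrete side the operator norm $\|L_N\|$ is of order $1/t_N\to\infty$ (since $\sum_w c_{vw}\approx t_N^{-1}\int p_{t_N}(v,\cdot)\dd\overline V= t_N^{-1}$), so Assumption~\ref{as:gap}, which only bounds $\lambda_2^N$ from below, gives you no uniform-in-$N$ control of the Taylor remainder.

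The paper sidesteps both issues: once uniform generator convergence is in hand, a Trotter--Kurtz theorem yields $\|S_t^N f|_{V_N}-(S_t f)|_{V_N}\|_{\infty,N}\to 0$ directly, without any power-series expansion; Assumption~\ref{as:semigroup} then follows from a short computation combining this with Assumption~\ref{as:empirical}. If you want to salvage your outline, the cleanest fix is to replace steps (ii)--(iii) by that semigroup-approximation argument.
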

Finally we extend the result to convergence in a stronger sense, namely in the Sobolev space {$H^{-s}(M)$} for some $s>0$. To do this we lift $\phi_N$ {using Voronoi cells with centers $(p_i)_{i=1}^N$ to a random distribution in $H^{-s}(M)$ by specifying the action}
\[
\langle \widetilde \phi_N,\,f\rangle:=\frac{1}{N}\sum_{i=1}^N \phi_N(p_i)\frac{1}{v_i}\int_{C_i}f(p)\overline{V}(\dd p)
\]
{with $v_i$ the volume of the cell $C_i$.}
{We also extend the definition of the CGFF $\phi$ to let it act on $H^k$ functions.}  Then we get the following theorem.
\begin{thm}\label{thm:vor}
Assume the conditions of Theorem~\ref{thm:1}. Then $\sqrt N \widetilde\phi_N $ converges to $ \phi$ in law in the strong topology of $H^{-s}$ for $s>d-1/2$.
\end{thm}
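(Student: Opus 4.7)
The plan is to combine two ingredients: tightness of $\sqrt N\widetilde\phi_N$ in the strong topology of $H^{-s}(M)$, and identification of the limit via Theorem~\ref{thm:1}. Since $M$ is compact, the embedding $H^{-s'}(M)\hookrightarrow H^{-s}(M)$ is compact whenever $s>s'$, so tightness follows once we produce
\begin{equation*}
\sup_N \E\bigl\|\sqrt N \widetilde\phi_N\bigr\|_{H^{-s'}}^2<\infty
\end{equation*}
for some $s'\in(d-1/2,s)$. Expanding in the $L^2(\bar V)$-orthonormal basis $(e_k)_{k\ge 1}$ of Laplace--Beltrami eigenfunctions with eigenvalues $(\lambda_k)_{k\ge 1}$ and using the definition of $\widetilde\phi_N$, this quantity equals $\sum_{k\ge 2}(1+\lambda_k)^{-s'}\frac{1}{N}\beta^{(k)\top}G_N\beta^{(k)}$, with $\beta^{(k)}_i:=v_i^{-1}\!\int_{C_i}e_k\,\dd\bar V$ and $G_N:=L_N^{-1}$ the Green's matrix on the zero-mean subspace.

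Two estimates then control each summand. By Assumption~\ref{as:gap}, $\|G_N\|_{\mathrm{op}}\le (\inf_N\lambda_2^N)^{-1}<\infty$ uniformly in $N$. By Sogge's $L^\infty$-bound on eigenfunctions of the Laplace--Beltrami operator on a compact manifold, $\|e_k\|_\infty\lesssim \lambda_k^{(d-1)/4}$, which yields $\|\beta^{(k)}\|_{\ell^2}^2\le N\|e_k\|_\infty^2\lesssim N\lambda_k^{(d-1)/2}$. Hence each summand is bounded by a constant multiple of $(1+\lambda_k)^{-s'}\lambda_k^{(d-1)/2}$ uniformly in $N$, and by Weyl's asymptotic $\lambda_k\asymp k^{2/d}$ the resulting series converges precisely when $s'>d-1/2$. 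This is exactly where the Sobolev exponent of the statement enters.

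For the identification of the limit, note that both $\widetilde\phi_N$ and the extension of $\phi$ annihilate constants, so it suffices to test against $f\in W$. Writing
\begin{equation*}
\langle\sqrt N\widetilde\phi_N,f\rangle \;=\; \sqrt N\langle\phi_N,f\rangle\;+\;\frac{1}{\sqrt N}\sum_{i=1}^N\phi_N(p_i)\Bigl(\tfrac{1}{v_i}\!\int_{C_i}f\,\dd\bar V - f(p_i)\Bigr),
\end{equation*}
the first summand converges in law to $\langle\phi,f\rangle$ by Theorem~\ref{thm:1}, while the variance of the second is bounded by $\|G_N\|_{\mathrm{op}}\|\grad f\|_\infty^2\max_i\mathrm{diam}(C_i)^2$. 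On a compact manifold the maximal Voronoi cell diameter must vanish as $N\to\infty$ (otherwise a persistent ball of positive $\bar V$-mass would contain no sample points, contradicting Portmanteau applied to Assumption~\ref{as:empirical}), so the remainder tends to $0$ in $L^2$. Combined with tightness, this identifies the limit as $\phi$ in the strong topology of $H^{-s}$.

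The crux is the tightness bound: the threshold $s>d-1/2$ is sharp for this argument because it is dictated by Sogge's eigenfunction estimate paired with the Weyl rate; the alternative bound $\|\beta^{(k)}\|_{\ell^2}^2\lesssim N$ (which would give the milder threshold $s>d/2$) would require a uniform lower bound on the cell volumes $v_i$ that is not available in the generality of Theorem~\ref{thm:1}. The remaining ingredients---the uniform operator-norm bound on $G_N$, the variance control of the discretisation error, and the vanishing of the maximal cell diameter---are comparatively routine once the spectral assumptions are in force.
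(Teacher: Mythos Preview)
Your proposal is correct and follows essentially the same route as the paper: tightness via the eigenfunction expansion of the $H^{-s}$ norm, the uniform operator-norm bound on $G_N$ from Assumption~\ref{as:gap}, the $L^\infty$ eigenfunction bound $\|e_j\|_\infty\lesssim\lambda_j^{(d-1)/4}$, Weyl's law, and Rellich compactness; then identification via the vanishing of the maximal Voronoi cell radius, which the paper isolates as Lemma~\ref{epsilonlemma} using the same Portmanteau argument you sketch.

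The only cosmetic difference is in the identification step. The paper shows directly that the variances converge, i.e.\ that $N^{-1}(\widetilde f_N,G_N\widetilde f_N)\to(f,Gf)$, by comparing with $N^{-1}(f|_N,G_Nf|_N)$ (already known from the proof of Theorem~\ref{thm:1}) and bounding the cross terms with Cauchy--Schwarz. You instead write $\langle\sqrt N\widetilde\phi_N,f\rangle=\langle\sqrt N\phi_N,f\rangle+R_N$ and invoke Theorem~\ref{thm:1} plus Slutsky for $R_N\to0$ in $L^2$. Both routes rest on the same estimate $N^{-1}\|\widetilde f_N-f|_N\|^2\le L_f^2\epsilon_N^2$, so this is a matter of packaging rather than substance.
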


\paragraph{Structure of the paper}In Section~\ref{sec:overview} we will give the precise definitions of the Gaussian fields we consider, as well as the necessary background on the geometry of the manifold and further insight on Assumptions~\ref{as:gap}-\ref{as:empirical}. Section~\ref{sec:proof} is devoted to showing the first two main Theorems, respectively in Subsections~\ref{subsec:thm1} and ~\ref{subsec:2}. The result in $H^{-s}(M)$ is stated and proved in Section~\ref{subsec:vor}.
\paragraph{Notation} In the following we will use $C,\,c,\,c',\,\ldots$ as absolute constants whose value may change from line to line even within the same equation. The norms with subscript $N$ are those on the graphs $V_N$. We will also use square brackets to denote dual pairings and round brackets for inner products.
\section{Preliminaries: definitions and assumptions}\label{sec:overview}

\subsection{The manifold}\label{subsec:manifold_as}
We assume $M$ to be a compact, connected and $d$-dimensional Riemannian manifold (for all of the following definitions see for instance~\cite{grigoryan2009heat}). The Riemannian structure induces the metric $d(\cdot,\cdot)$. We denote the volume measure on $M$ by $V$ and the uniform measure by $\bar V:={V}/{V(M)}$ (note that $M$ is compact, so $V(M)<\infty$). On $M$ we can define the heat semigroup\footnote{Note that one can construct the heat semigroup on either $C(M)$ or $L^2(M)$. We will need both representations in what follows. However since we will evaluate the semigroups on the set of smooth functions, where they agree, we do not need to specify which one we are using.} $(S_t,\,t\geq 0)$ generated by the Laplace--Beltrami operator $\Delta_M$ and the corresponding heat kernel $p_t(p,\,q)$ such that
\begin{equation*}
    S_tf(p) = \int_M p_t(p,\,q)f(q)\overline V (\dd q),\quad f\in L^2.
\end{equation*}
Recall from the introduction that $W\subset C^\infty(M)$ consists of the zero-average smooth functions on $M$. It is equipped with the topology that is generated by the seminorms
\begin{equation*}
    \sup_K |\partial^\alpha u|,
\end{equation*}
where $K$ ranges over the compact sets that are contained in charts and $\partial^\alpha$ ranges over partial derivatives in charts containing $K$.
For $f,\,g\in L^2(M)$ we denote
\begin{equation*}
    (f,g):=\int_M f(p)g(p) \dd \overline V.
\end{equation*} 
We recall some basic facts on the Green's function of $-\Delta_M$ (for more details we refer the reader to~~\citet[Chapter~4]{aubin},~\citet{donaldson},~\citet[Chapter 13]{grigoryan2009heat}). One knows that on a compact manifold the spectrum of $-\Delta_M$ is discrete, and is given by $0=\lambda_1<\lambda_2\le \lambda_3\ldots$ The Green kernel on $M$ is given by the following sum in $L^2(M)$:
\begin{equation}\label{eq:decom_G2}
G:=\sum_{j\ge 2}\frac{1}{\lambda_j}P_j
\end{equation}
with $P_j$ the projection on the $j$-th eigenspace of $-\Delta_M$. 
We also recall that on a compact Riemannian manifold without boundary
$f=G\rho$ solves $-\Delta_M f=\rho$ for the input datum $\rho\in W$ and the solution is normalized to have integral zero. Moreover in that case $f\in W$.
\subsection{The zero-average discrete Gaussian free field}\label{sec:DGFF} We will now recall some definitions concerning the discrete Gaussian free field. The idea behind the construction follows the use of fundamental matrices to define Gaussian processes \cite[Section 14.6.2]{Aldous/Fill} and has been applied for example in studying the zero-average DGFF on the torus by \cite{Abacherli:2017} .

Let $\V$ be a finite graph. For $v,\,w\in \V$, let $c_{vw}=c_{wv}\geq 0$ be the conductance between $v$ and $w$. Assume that $\V$ is connected in the sense that for any $v,w\in \V$ there is a path from $v$ to $w$ such that each edge that is traversed has strictly positive conductance. We define the graph laplacian acting on functions $f:\V\rightarrow\R$ by
\begin{equation*}
    Lf(v)=-\sum_{w\in \V} c_{vw} (f(w)-f(v)), \hspace{0.5cm} v\in \V.
\end{equation*}
Since the graph is symmetric all the eigenvalues are non-negative and the corresponding eigenspaces are orthogonal. Moreover, we can conclude from the connectedness that there is exactly one eigenvalue $0$ (see for instance~\citet[Chapter 1]{chung1997spectral}) with eigenfunction the constant function $\1$. Because of this, the following definition makes sense.
\begin{defi}[Discrete Green's function]
{We define the Green's operator as the linear operator on functions $f:\V\rightarrow \R$ uniquely defined by the following action on two linear subspaces}
\begin{equation*}
    G^\V f := \begin{cases} L^{-1}f & f\perp \1 \\ 0 &  f=c\1 \end{cases}.
\end{equation*}
\end{defi}
Here $\1$ is the function constantly equal to one. There is also an explicit characterization of $G^\V$, which we are going to use in the following. Assume that $\V$ has $n$ points. Denote the eigenvalues of $L$ by $0=\lambda_{1}^n< \lambda_{2}^n\le\,\ldots\le\lambda_{n}^n$, possibly with multiplicities. Since $\1$ is exactly the eigenspace corresponding to $\lambda_{1}^n$ we can write
\begin{equation}\label{spect_rep_G_N}
    G^\V=\sum_{j=2}^n \frac{1}{\lambda_{j}^n} P_{j}^n.
\end{equation}
Here $P_{j}^n$ is the projection on the eigenspace corresponding to the $j$-th eigenvalue of $L$.

Now that we have introduced the Green's function, we can make the following definition.
\begin{defi}[DGFF as a multivariate Gaussian]\label{defi:multivariate}
The zero-average Gaussian free field $\phi_\V$ on $\V$ is the Gaussian vector indexed by $\V$ with mean $0$ and covariance matrix $G^\V$. 
\end{defi} 
Note that $G^\V$ is symmetric and positive definite on $\{f\perp\1\}$ (since $L$ is) and $0$ on the rest. Therefore $\phi_\V$ lives in an $(n-1)$-dimensional space and is degenerate in the direction of the constant vectors. Indeed, as the name indicates, $\phi_\V$ has average $0$ almost surely. One can see this since
\begin{equation*}
    \var\left(\sum_{v\in \V}\phi_\V(v)\right)=\1^TG\1 = 0
\end{equation*}
so
\[
\frac{1}{|\V|}\sum_{v\in \V} \phi_\V(v) = 0 \quad\text{ a.s.}
\]

One of the most important properties of the DGFF is the Markov property, i.e. that the DGFF restricted to a subset of the underlying graph only depends on the rest of the graph through the boundary of that subset~\cite[Proposition~2.3]{ASS:2012}. In a zero-average DGFF this is no longer true, since the total average should be zero. Moreover, the restriction of a zero-average DGFF to a subset is not even a zero-average DGFF. However, we can still study the restriction of the zero-average DGFF to a subset when we subtract the harmonic interpolation of its values on the boundary. This turns out to be a DGFF, as it is shown in~\citet[Lemma 1.7]{Abacherli:2017} for the zero-average DGFF on the torus. The same proof works in our case, given a few generalizations of the definitions that are involved. We will now formulate the statement. To this end, let $X=(X_t,t\geq 0)$ denote the random walk on $\V$ generated by $-L$, denote by $\E_v$ and $\p_v$ the expectation and law of $X$ started from $v\in \V$, respectively, and set $T_U=\inf\{t:X_t\notin U\}$ .
\begin{lemma}\label{lem:Angelo}
Let $\U\subset\V$ be a proper subset and for $v\in\V$ define
\begin{equation*}
    \phi^\U(v):=\phi^\V(v)-\E_v[\phi^\V(X_{T_\U})].
\end{equation*}
Then $\phi^\U$ is a centered Gaussian field with covariance matrix
\begin{equation*}
    G^\U(v,w)=\E_v\left[\int_0^{T_\U}\1_{\{X_t=w\}}\dd t\right].
\end{equation*}
\end{lemma}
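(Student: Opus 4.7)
The starting observation is that $\phi^\U$ is a deterministic linear transformation of the Gaussian vector $\phi^\V$:
\[
\phi^\U(v) = \phi^\V(v) - \sum_{y\in\V}\p_v(X_{T_\U}=y)\,\phi^\V(y),
\]
so it is automatically Gaussian with mean zero and only the identification of its covariance requires work. Expanding bilinearly, with $X'$ an independent copy of $X$ started at $w$, the plan is to work with
\begin{align*}
\Cov(\phi^\U(v),\phi^\U(w)) = {}& G^\V(v,w) - \E_v[G^\V(X_{T_\U},w)] \\
& - \E_w[G^\V(v,X'_{T'_\U})] + \E_v\otimes\E_w\bigl[G^\V(X_{T_\U},X'_{T'_\U})\bigr].
\end{align*}

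The key analytical input will be a Dynkin-type identity for $G^\V$. Since $G^\V$ is the Moore--Penrose inverse of $L$ on $\1^\perp$, for every fixed $y\in\V$ one has $L G^\V(\cdot,y) = \delta_y - |\V|^{-1}\1$. As $-L$ generates $X$ and $T_\U$ has finite expectation ($\V$ is finite and connected with $\U\subsetneq\V$), applying Dynkin's formula to $G^\V(\cdot,y)$ will give
\[
G^\V(v,y) - \E_v[G^\V(X_{T_\U},y)] = \E_v\!\left[\int_0^{T_\U}\!\!\1_{\{X_t=y\}}\,\dd t\right] - \frac{\E_v[T_\U]}{|\V|} = G^\U(v,y) - \frac{\E_v[T_\U]}{|\V|}.
\]

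I would then substitute this into the covariance expansion. Taking $y=w$ accounts for the first two terms and yields $G^\U(v,w) - |\V|^{-1}\E_v[T_\U]$. For the remaining pair I would use that $X'_{T'_\U}\in\V\setminus\U$ almost surely, so $G^\U(v,X'_{T'_\U})=0$; conditioning on $X'$, applying the same identity with $y=X'_{T'_\U}$, and then integrating out $X'$ yields
\[
-\E_w\bigl[G^\V(v,X'_{T'_\U}) - \E_v[G^\V(X_{T_\U},X'_{T'_\U})]\bigr] = \frac{\E_v[T_\U]}{|\V|}.
\]
The two $|\V|^{-1}\E_v[T_\U]$ corrections cancel, leaving $\Cov(\phi^\U(v),\phi^\U(w)) = G^\U(v,w)$. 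The edge cases $v\notin\U$ or $w\notin\U$ are immediate since then $T_\U=0$ forces $\phi^\U$ to vanish there.

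The hard part, and essentially the only departure from the classical Markov-property proof for the Dirichlet DGFF, will be bookkeeping the correction coming from the zero-average constraint: $G^\V$ is a pseudo-inverse rather than a true inverse of $L$, so Dynkin's formula produces an extra $|\V|^{-1}\E_v[T_\U]$ term at each use. The argument succeeds because the symmetry of the covariance expansion forces these corrections in the direct and cross terms to cancel, leveraging crucially that the independent exit $X'_{T'_\U}$ lands in $\V\setminus\U$.
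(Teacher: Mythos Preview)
Your argument is correct. The linearity observation makes Gaussianity immediate; the Dynkin identity
\[
G^\V(v,y)-\E_v\bigl[G^\V(X_{T_\U},y)\bigr]=G^\U(v,y)-\frac{\E_v[T_\U]}{|\V|}
\]
follows exactly as you say from $LG^\V(\cdot,y)=\delta_y-|\V|^{-1}\1$ and the finiteness of $\E_v[T_\U]$; and the cancellation of the two $|\V|^{-1}\E_v[T_\U]$ corrections works because $X'_{T'_\U}\in\V\setminus\U$ forces $G^\U(v,X'_{T'_\U})=0$. All steps are sound.

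Your route differs from the paper's. The paper does not carry out the covariance computation itself: it cites \cite{Abacherli:2017} for the full argument and only supplies the one ingredient missing in the present setting, namely the integral representation $G^\V(v,w)=\int_0^\infty(\p_v[X_t=w]-1/n)\,\dd t$, which it verifies by showing the right-hand side inverts $L$ on $\1^\perp$ and annihilates constants. In contrast, you give a self-contained proof that bypasses this representation entirely, working instead with the algebraic relation $LG^\V=\mathrm{Id}-|\V|^{-1}\1\1^T$ and Dynkin's formula. Your approach has the merit of being fully explicit about how the zero-average constraint enters and why the resulting correction terms cancel; the paper's approach has the merit of making clear that nothing essentially new is happening beyond the torus case once the Green's function is correctly identified.
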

\begin{proof}The proof of this Lemma is essentially the same of \citet{Abacherli:2017}, with two main remarks that we want to stress now. Firstly let us note that we have to use the continuous-time random walk (as opposed to the situation in~\cite{Abacherli:2017}), since the rates of the exponential weighting times do not have to be equal. Secondly, to be able to mimic the proof given on the $d$-dimensional flat torus we need to prove that our Green's function is the same as \citet{Abacherli:2017}, i.e. that
\begin{equation}\label{eq:key_Angelo}
    G^\V(v,w)=\int_0^\infty \left(\p_v[X_t=w]-1/n\right)\dd t=:H^\V(v,w).
\end{equation}
To show~\eqref{eq:key_Angelo}, note first of all that $H^\V\1=0$. So it remains to show that $H^\V=L^{-1}$ on $W=\{f:f\perp \1\}$. First of all note that for $f\in W$
\begin{equation*}
    H^\V f(v) = \int_0^\infty \left[\sum_{w\in \V} \p_v(X_t=w)f(w) - \frac{1}{n} \sum_{w\in \V} f(w)\right] \dd t
    = \int_0^\infty S_tf(v)\dd t,
\end{equation*}
where $S_t=\exp(-tL)$ is the semigroup corresponding to $X$. In particular 
\begin{equation*}
    \sum_{v\in\V} H^\V f(v) = \int_0^\infty \sum_{v\in \V} S_tf(v)\dd t=0
\end{equation*}
since by symmetry of the random walk
\begin{equation*}
    \sum_{v\in \V} S_tf(v) = \sum_{v\in \V}\sum_{w\in \V} \p_v(X_t=w)f(w) = \sum_{w\in \V} f(w) \sum_{v\in \V} \p_w(X_t=v) = \sum_{w\in \V} f(w) = 0.
\end{equation*}
This implies that $H^\V$ maps $W$ into $W$. Moreover for $f\in W$
\begin{equation*}
    LH^\V f = L\int_0^\infty S_tf \dd t = \int_0^\infty LS_tf\dd t = \int_0^\infty \frac{\dd}{\dd t} \left(-S_t f\right) \dd t = -S_t f|_0^\infty = f.
\end{equation*}
Note that we used that $\lim_{t\rightarrow\infty} S_tf = 0$, since $f$ is zero-average. This finishes the proof.
\end{proof}

Now suppose our graph $\V$ consists of points of a manifold (which we generally denote by $p$ or $q$).
To speak of convergence of the DGFF to the GFF, we need to define them as comparable objects. To this end, we interpret them as random linear functionals on $W$. For the DGFF $\phi_\V$ this means introducing the following definition.
\begin{defi}[DGFF as random distribution]\label{def:distr}
Define for $f\in W$
\begin{equation*}
    \langle \phi_\V,\,f\rangle: = {\frac{1}{|\V|}}\sum_{p\in\V}f(p)\phi_\V(p).
\end{equation*}
\end{defi}
Note that, for each $\omega$ in the underlying probability space, $\phi_\V(\omega)$ is a well-defined linear functional on $W$, so an element of $W'$. Moreover, it is easy to see that this mapping is continuous (with respect to the weak* topology on $W'$), so in particular measurable. This implies that $\phi_\V$ can be interpreted as a random distribution on $M$.

\subsection{The continuum GFF}
Recall $W=\{f\in C^\infty(M): \int_M f\dd \overline V = 0\}.$ 
We now give the following definition.
\begin{prop}[GFF on $M$]
There exists a centered Gaussian random distribution $\phi:=\{\langle \phi,\,f\rangle:\,f\in W\}$ on $W'$ with covariance kernel $G$ given in~\eqref{eq:decom_G2}, that is, for all $f,\,g\in W$,
\begin{align*}
\E\left[\langle \phi,\,f\rangle\langle \phi,\,g\rangle\right]=(f,\,G g).
\end{align*}
We call this distribution the GFF on $M$.
\end{prop}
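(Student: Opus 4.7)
My plan is to construct $\phi$ directly as a Wiener-type series in the $L^2(M,\overline V)$-eigenbasis of $-\Delta_M$, verify the stated covariance via Parseval together with the spectral representation~\eqref{eq:decom_G2}, and finally exhibit a version that is almost surely a continuous linear functional on $W$.

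Since $M$ is compact, $-\Delta_M$ admits an $L^2(M,\overline V)$-orthonormal eigenbasis $(e_j)_{j\ge 1}$ with eigenvalues $0=\lambda_1<\lambda_2\le\lambda_3\le\cdots$. I would fix on some probability space an i.i.d.\ sequence $(\xi_j)_{j\ge 2}$ of standard normals and, for $f\in W$, set
\[
\langle\phi,f\rangle \;:=\; \sum_{j\ge 2}\frac{(f,e_j)}{\sqrt{\lambda_j}}\,\xi_j.
\]
The variance of the partial sums equals $\sum_{j\ge 2}\lambda_j^{-1}(f,e_j)^2$, and the spectral gap $\lambda_2>0$ combined with Parseval yields the crude bound $\lambda_2^{-1}\|f\|_{L^2}^2$, so the series converges in $L^2(\Omega)$ and defines a centred Gaussian random variable. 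The covariance identity is then immediate from $\E[\xi_j\xi_k]=\delta_{jk}$ and~\eqref{eq:decom_G2}:
\[
\E[\langle\phi,f\rangle\langle\phi,g\rangle]=\sum_{j\ge 2}\frac{(f,e_j)(g,e_j)}{\lambda_j}=\Big(f,\,\sum_{j\ge 2}\tfrac{1}{\lambda_j}P_j g\Big)=(f,Gg).
\]

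The step that will need the most care is showing that $\phi$ lives in $W'$. Linearity of $f\mapsto\langle\phi,f\rangle$ is inherited from the finite partial sums, and the variance bound above shows that this map is continuous from $(W,\|\cdot\|_{L^2})$ into $L^2(\Omega)$; equivalently, the characteristic functional $f\mapsto\exp(-\tfrac12(f,Gf))$ is continuous on the nuclear Fréchet space $W$. Minlos' theorem then provides a Borel probability measure on $W'$ (with the weak* topology) having this as its characteristic function, which produces the desired version of $\phi$. Everything else reduces to standard spectral theory on a compact manifold, so the main obstacle is really just extracting an almost-sure continuous version, which is handled cleanly by Minlos via the nuclearity of $C^\infty(M)$.
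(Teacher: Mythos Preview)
Your proof is correct and ultimately rests on the same tool as the paper---Bochner--Minlos on the nuclear space $W$---but you reach it by a slightly different route. The paper goes straight to Bochner--Minlos and verifies the three conditions on the characteristic functional $\mathcal L_\phi(f)=\exp(-\tfrac12(f,Gf))$ abstractly: positive definiteness is obtained by quoting a lemma to the effect that $(f,g)\mapsto(f,Gg)$ is an inner product on $W$, and continuity is derived from boundedness of $G$ on zero-mean $L^2$ via the Poincar\'e inequality. You instead build the Gaussian family explicitly as the Wiener series $\sum_{j\ge2}\lambda_j^{-1/2}(f,e_j)\xi_j$, which buys you positive definiteness of the characteristic functional for free (it is $\E[\exp(i\langle\phi,f\rangle)]$ for actual random variables) and gives the covariance identity by a one-line Parseval computation; your continuity argument via the crude spectral-gap bound $(f,Gf)\le\lambda_2^{-1}\|f\|_{L^2}^2$ is in fact cleaner than the paper's detour through Poincar\'e. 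The trade-off is that your construction is slightly redundant---once you invoke Minlos you did not strictly need the series---whereas the paper's argument is shorter but leans on an external reference for positive definiteness.
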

\begin{proof}
Note that $W$ is a nuclear space, being a subspace of the nuclear space $C^\infty(M)$. By the Bochner--Minlos theorem for nuclear spaces \cite[Theorem A]{umemurathesis:1965}, it suffices to show that the characteristic functional
\begin{align*}
  \mathcal L_\varphi:\,W&\rightarrow\R\\
    f&\mapsto \exp\left(-\frac{1}{2}(f,\,Gf)\right)
    \end{align*}
is continuous around $0$, positive definite and satisfies $\mathcal L_\varphi(0)=1$. The latter is clear. \\
To show positive definiteness one can use \citet[Proposition~2.4]{LSSW:2016}, which says that $L_\varphi$ is positive definite if $$f\mapsto (f,\,f)_G:=(f,\,G f)$$
is an inner product on $W$. This follows from the fact that $G$ is a self-adjoint positive definite operator on $W$ (compare~\eqref{eq:decom_G2}). Finally, since $f=G\rho\in L^2(M)$ is the unique solution with integral zero to the Poisson equation with input datum $\rho\in L^2(M)$, also with integral zero, one can use the Poincar\'e inequality and
\[
\|\nabla f\|^2_2=(\Delta_M f,\,f)\le\|f\|_2\|\rho\|_2
\]
to conclude that $G$ is a bounded and hence continuous operator on the set of zero-average square integrable functions on $M$. Since convergence in $W$ implies convergence in $L^2$, it is immediate to see with Cauchy-Schwarz that $f\mapsto (f,\,Gf)$ is continuous and hence that $\mathcal L_\varphi$ is continuous.
\end{proof}

\subsection{Comments on Assumptions~\ref{as:gap}-\ref{as:empirical}} Let $(V_N)_{N=1}^\infty$ be a sequence of finite subsets of the manifold $M$ with corresponding conductances $c^N_{pq}=c^N_{qp}\geq 0$ for $p,\,q\in V_N$ such that each $V_N$ is connected in the sense described in Subsection~\ref{sec:DGFF}. Throughout this paper we assume that $V_N$ consists of $N$ points, which we label $p_1^N,\,\ldots,\,,p_N^N\in M$.\footnote{This is not an essential requirement, it just makes our notation less involved. For instance, for some natural sequences of grids the amount of points in $V_N$ is $N^d$ where $d$ is the dimension of the ambient space. With some straightforward changes our results hold in those cases too.} 
Let $(L_N)_{N=1}^\infty$, $(G_N)_{N=1}^\infty$ and $(\phi_N)_{N=1}^\infty$ be the sequences of corresponding generators, Green's functions and zero-average discrete Gaussian free fields on $V_N$, respectively, and for each $N$ let $\{S_t^N,t\geq 0\}$ denote the semigroup on $V_N$ that is generated by $L_N$. Note that we can also interpret $\phi_N$ as a random function on $W'$, as we described in Definition~\ref{def:distr}.\\
Let us comment more on the necessity of Assumptions~\ref{as:gap}-\ref{as:empirical} of Theorem~\ref{thm:1}. First of all, as we discussed above, all eigenvalues of $L_N$ are non-negative and only one eigenvalue equals $0$. We denote the second smallest eigenvalue (or the spectral gap) by $\lambda^N_2$. Then we know that $\lambda^N_2>0$, so each spectral gap is positive. Assumption~\ref{as:gap} says that the spectral gaps are uniformly positive, i.e.
\begin{equation*}
    \inf_N \lambda^N_2 >0.
\end{equation*}
Without this condition what could happen is that the spectrum of the graph laplacian would eventually capture the $0$-eigenvalue of $-\Delta_M$ \added[id=ale]{(compare~\citet[Result 3]{VL/B/B:2008} for a case in which spectral convergence fails)}. In this case, we would not be anymore in the domain of invertibility of the Green's function.
Secondly, we define the zero-average discrete version of any function $f:M\rightarrow\R$ to be
\begin{align*}
    f_N:V_N&\rightarrow \R\\
    p_i^N&\mapsto f(p_i^N) - \frac{1}{N}\sum_{i=1}^N f(p_i^N).
\end{align*}
Moreover, we define an inner product on $\R^{V_N}$ by $(f,g)=\sum_{i=1}^Nf(p_i^N)g(p_i^N)$. 
Now Assumption~\ref{as:semigroup} states that for each $f\in W$
\begin{equation*}
    \lim_{N\to\infty}\frac{1}{N}(f_N,S_t^N f_N) = (f,\,S_t f).
\end{equation*}
Assumption~\ref{as:semigroup} is probably the most natural one would expect in a convergence-to-GFF-type result: as we will see, it implies that the bilinear forms induced by the Green's functions converge pointwise (see Equation~\eqref{eq:convGreen} for the precise statement). One can ensure this limit via a stronger result, namely the uniform convergence of the discrete laplacian to the continuum one. This will be our strategy in the proof of Theorem~\ref{thm:2}. 
Finally, the third Assumption makes sure that the empirical measures corresponding to the grids converge weakly to the uniform distribution on the manifold. Therefore summing over grid points approximates integrating over the manifold in the same way as discrete lattice sums in $\Z^d$ approximate integrals in $\R^d$.

\section{Proofs}\label{sec:proof}
Here we present the proofs of our main results. In Subsection~\ref{subsec:thm1} we will show that Assumptions~\ref{as:gap}-\ref{as:empirical} entail the convergence of the rescaled DGFF to the continuum one. We will show, using a spectral decomposition, that the variance of the distribution $\phi_N$ tested against smooth functions converges to that of the continuum field under Assumption~\ref{as:semigroup}. Assumptions~\ref{as:gap} and~\ref{as:empirical} will ensure enough regularity to get this convergence. Note that we will not use here the potential theory for the random walk to prove the scaling limit, in contrast to the $\Z^d$ case (a proof in $d=2$ is for example carried out in \citet[Section~1.4]{Biskup:2017}).

Theorem~\ref{thm:2} will be shown in Subsection~\ref{subsec:2}. We will sample uniform points from the manifold, and choose as conductances the heat kernel as explained in the Introduction. The proof of the validity of Assumptions~\ref{as:gap}-\ref{as:empirical} is in three steps (each step shows one assumption). First we will use the fact that the empirical measures corresponding to the grids almost surely converge in Kantorovich sense to the uniform measure $\overline V$, which implies weak convergence. Then we will show that the graph laplacians converge, uniformly over the grid points, to the Laplace--Beltrami operator. This will be done by choosing the bandwidth $t_N$ appropriately, following the ideas of \cite{vanGinkel:2017,vanGinkel/Redig:2018} (here we will need again the Kantorovich convergence of the empirical measures).  Finally, to show the bound on the spectral gap, we will use techniques developed in~\cite{VL/B/B:2008, Belkin/Niyogi:2007} by proving  convergence to an ``intermediate'' operator whose eigenvalues approximate those of the Laplace--Beltrami. This will yield a second condition on the rate of growth of $t_N$, and by combining the two we will obtain the final result.

\subsection{Proof of Theorem~\ref{thm:1}}\label{subsec:thm1}
We would like to prove that $\sqrt N\phi_N\rightarrow\phi$ in law in $W'$. Since $W$ is a nuclear Fr\'echet space, by~\citet[Theorem 2]{meyer9theoreme} 
it suffices to prove pointwise convergence of the characteristic functional, i.e.~that for any $f\in W$ 
\begin{equation*}
    \E \exp\left(\im\left\langle \sqrt N\varphi_N,f\right\rangle\right)\rightarrow  \E \exp(\im\left\langle\varphi,f\right\rangle).
\end{equation*}

Recall that we define $f_N:V_N\rightarrow\R$ by $f_N:=f|_{V_N}-{1}/{N}\sum_{i=1}^Nf|_{V_N}(p_i^N)$. We could subtract any constant from $f|_{V_N}$ since $\phi_N$ has average $0$, but we choose to subtract the discrete average since it ensures that $f_N$ belongs to the discrete counterpart of $W$. We can abbreviate
\[
G_N(i,\,j):=G_N(p_i^N,\,p_j^N)
\]
and we see that
\begin{align*}
    \E&\exp\left(\im\left\langle \sqrt N\varphi_N,f\right\rangle\right) = \E\exp\left(\im\frac{1}{N}\sum_{i=1}^Nf(p_i^N)\sqrt N\phi_N(p_i^N)\right) \\
    &= \exp\left(-\frac{1}{2N}\sum_{i=1}^N f_N(p_i^N)f_N(p_j^N) G_N (i,\,j)\right)=\exp\left(-\frac{1}{2N}(f_N,\,G_N f_N)\right).
\end{align*}
Further $\E\exp(\im\left\langle \varphi,f\right\rangle)=\exp(-{1}/{2}(f,\,Gf))$ (by definition of $\phi$). Therefore it suffices to show that 
\begin{equation}\label{eq:convGreen}
    N^{-1}\sum_{i=1}^N f_N(p_i^N)f_N(p_j^N) G_N (i,\,j)\rightarrow (f,\,Gf)
\end{equation}
for every $f\in W$. 

We now want to make use of the spectral decomposition of the Green's function. Let $0<\lambda_2^N\le\lambda_3^N\le\ldots\le\lambda_N^N$ be the non-zero eigenvalues of $L_N$. Define the measure $\mu^f_N$ on $\sigma(L_N)$ by
\begin{equation*}
   \mu^f_N(A) :=\sum_{j=2}^N \1_A(\lambda_i^N) \|P_{j,N}f_N\|_{2,N}^2
\end{equation*}
for $A\subset \sigma(L_N$). The total mass of the measure $\mu^f_N$ is 
\[\sum_{j=2}^N \|P_{j,N}f_N\|_{2,N}^2 = \|f_N\|^2_{2,N}.\]
Similarly define $\mu^f(A)$ on $\sigma(L)$ by
\begin{equation*}
    \mu^f(A):=\sum_{j=2}^\infty \1_A(\lambda_i) \|P_{j}f\|_2^2
\end{equation*}
for $A\subset \sigma(L)$ and $\lambda_2\le \lambda_3\le\ldots$ the positive eigenvalues of $-\Delta_M$. This is a measure with total mass 
\[\sum_{j=2}^\infty \|P_{j}f\|_2^2 = \|f\|^2_{2}.
\]
Note that since $P_{j,N}$ is a projection and since $P_{1,N}f_N=0$ by construction of $f_N$, we see by~(\ref{spect_rep_G_N}) that
\begin{align}
    (f_N,\,G_N f_N) &= \sum_{j=2}^N \frac{1}{\lambda_{j}^{N}} (f_N,P_{j,N}f_N)\nonumber \\
    &= \sum_{j=2}^N \frac{1}{\lambda_{j}^{N}} (P_{j,N}f_N,P_{j,N}f_N) = \sum_{j=2}^N \frac{1}{\lambda_{j}^{N}} \left\|P_{j,N}f_N\right\|^2_{2,\,N} = \E_{\mu^f_N}X^{-1}.\label{eq:X_inverse}
\end{align}
Analogously, by \eqref{eq:decom_G2} one deduces
\begin{equation*}
    (f,Gf) = \E_{\mu^f}X^{-1}.
\end{equation*}
Now note that by Tonelli's theorem
\begin{equation*}
    \frac{1}{N}(f_N,\,G_N f_N) = \frac{1}{N}\E_{\mu^f_N}X^{-1} = \frac{1}{N}\E_{\mu^f_N}\int_0^\infty \e^{-tX}\dd t = \int_0^\infty \frac{1}{N}\E_{\mu^f_N} \e^{-tX}\dd t.
\end{equation*}
Denote $\delta:=\inf_N \lambda^N_2>0$ by Assumption~\ref{as:gap}. Then we see that
\begin{equation*}
    0\leq \frac{1}{N}\E_{\mu^f_N} \e^{-tX}\leq \frac{1}{N}\mu^f_N(\sigma(L_N)) \e^{-t\lambda_2^N} \leq \frac{1}{N}\|f_N\|^2_{2,N} \e^{-t\delta}.
\end{equation*}
Now note that
\begin{equation*}
    \frac{1}{N}\|f_N\|^2_{2,N} = \frac{1}{N} \sum_{i=1}^N \left(f(p_i^N)-\frac{1}{N}\sum_{j=1}^Nf(p_i^N)\right)^2 \leq \frac{1}{N} \sum_{i=1}^N f(p_i^N)^2.
\end{equation*}
By the continuity of $f$ and Assumption~\ref{as:empirical}, the last term converges to $\|f\|_2^2$. Therefore there exists a $C>0$ such that for all $N$
\begin{equation*}
    0\leq \frac{1}{N}\E_{\mu^f_N} \e^{-tX} \leq C \e^{-\delta t}.
\end{equation*}

By the dominated convergence theorem, this implies that
\begin{align}
    \lim_{N\rightarrow\infty} \int_0^\infty \frac{1}{N}\E_{\mu^f_N} \e^{-tX}\dd t &= \int_0^\infty \lim_{N\rightarrow\infty} \frac{1}{N} \E_{\mu^f_N} \e^{-tX}\dd t\nonumber\\
    &= \int_0^\infty \lim_{N\rightarrow\infty} \frac{1}{N}(f_N,\,S_t^Nf_N) \dd t.\label{eq:semigroup_appears}
\end{align}

Now we conclude thanks to Assumption~\ref{as:semigroup}:
\begin{align*}
    \lim_{N\rightarrow\infty}\frac{1}{N}(f_N,G_Nf_N) &\stackrel{\eqref{eq:X_inverse}}{=} \lim_{N\rightarrow\infty}\frac{1}{N}\E_{\mu^f_N}X^{-1} \stackrel{\eqref{eq:semigroup_appears}}{=} \int_0^\infty \lim_{N\rightarrow\infty} \frac{1}{N}(f_N,\,S_t^Nf_N) \dd t \\
    &= \int_0^\infty (f,\,S_tf) \dd t = (f,\,Gf).
\end{align*}
Note that in the last equality we have used the fact that $f$ has average zero on $M$.\qed
\begin{rmk}[Compatibility with known grids]\label{rmk:torus}
For any integer $N\in \N$ consider the quotient space $\mathbb{S}_N:=\Z/N\Z$. A finite product of $d$ copies of $\mathbb{S}_N$ defines a discrete torus $\mathbb T_N^d$ of side-length $N$. This object is naturally connected to the $d$-dimensional (flat) torus $\mathbb T^d$ given by a product of $d$ copies of $\mathbb S^1$. The rescaled graph Laplacian $L_N$ on $N^{-1}\mathbb T_N^d$ is the sum of the Laplacians $\mathcal L_N$ on each discrete $N^{-1}\mathbb S_N$ component. More precisely, $\mathcal L_N$ is defined for any $f : N^{-1}\mathbb S_N \to\R$ by
the following difference operator:
\[
\mathcal L_N f(k):=\frac{N^2}{4\pi^2}\left[(f(k)-f(k-1/N))+(f(k)-f(k+1/N))\right],\quad k\in N^{-1}\mathbb S_N.
\]
The spectra of $\mathcal L_N$ and $L_N$ are thus given by
\begin{align*}
\sigma(\mathcal L_N)&=\left\{\frac{N^2}{\pi^2}\sin^2\left(\frac{\pi k}{N}\right):\,k\in\{0,\,1,\,\ldots,\,N-1\}\right\},\\
\sigma(L_N)&=\left\{\frac{N^2}{\pi^2}\sum_{i=1}^d\sin^2\left(\frac{\pi k_i}{N}\right):\,k_i\in\{0,\,1,\,\ldots,\,N-1\},\,i\in\{1,\,\ldots,\,d\}\right\}.
\end{align*}
One can show that, with the rescaling $N^2$, the eigenvalues of $L_N$ converge to those of the Laplace--Beltrami operator on $\mathbb T^d$ as $N$ grows. Since the spectral gap of the Laplace-Beltrami operator is strictly positive, this ensures Assumption~\ref{as:gap}. A Taylor expansion yields that
\[
\mathcal L_N f(k)=\frac{f''(k)}{4\pi^2}+O\left(N^{-1}\right)
\]
and the $O$-term can be bounded uniformly in $k$ due to the compactness of the torus and the translation invariance of the situation. By summing over $d$ coordinate directions, we obtain the approximation to the Laplace--Beltrami operator on $\mathbb T^d$ (which is simply the sum of the second derivatives). A theorem of Trotter and Kurtz gives convergence of the corresponding semigroups, after which Assumption~\ref{as:semigroup} follows from a direct computation (see Corollary~\ref{cor:convSem} and Proposition~\ref{prop:a2holds} for the details in the manifold case). Finally, Assumption~\ref{as:empirical} is a consequence of the approximation of integrals via Riemann sums in $\R^d$.
\end{rmk}
\subsection{Proof of Theorem~\ref{thm:2}}\label{subsec:2}
Since the proof of Theorem~\ref{thm:2} is divided into three steps, the next three paragraphs will be dedicated to showing the validity of each assumption separately. 
\begin{rmk}[Quenched results]\label{rem:almostsure}
Note that all the upcoming assertions and quantities like the bandwidths depend on the realization of $(p_i^N)_{i=1}^N$. We will show a quenched result, meaning that we assume from now on that the grid points are fixed on $M$. Thus all the statements of this Subsection are meant in an almost-sure sense in the law of the grid points.
\end{rmk}

\subsubsection{Assumption~\ref{as:empirical} holds}
This Assumption, in the case of uniformly sampled grid points, is bypassed by the following stronger convergence result.
\begin{lemma}
Let $(p_i^N)_{i=1}^N$ be a sequence of i.i.d. points sampled from the normalized volume measure on $M$ and let $\mu^N$ be the corresponding empirical measure. Then
\[
\lim_{N\to \infty}W_1(\mu^N,\,\overline V)=0.
\]
\end{lemma}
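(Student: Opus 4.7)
The plan is to reduce the $W_1$-convergence to ordinary weak convergence of the empirical measures, which then follows from a standard Glivenko--Cantelli-type theorem (Varadarajan's theorem).

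First I would recall that on a compact metric space such as $(M,d)$, the topology of weak convergence of Borel probability measures is metrized by the Kantorovich $1$-Wasserstein distance $W_1$. This is classical: by the Kantorovich--Rubinstein duality,
\begin{equation*}
W_1(\mu,\nu) = \sup_{\|f\|_{\mathrm{Lip}}\le 1}\left|\int_M f\,\dd\mu - \int_M f\,\dd\nu\right|,
\end{equation*}
and since $M$ is compact the unit ball of $1$-Lipschitz functions (modulo constants) is relatively compact in $C(M)$ by Arzel\`a--Ascoli. Hence $\mu_n\to \nu$ weakly if and only if $W_1(\mu_n,\nu)\to 0$.

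Next I would invoke Varadarajan's theorem: if $(p_i^N)_{i\ge 1}$ are i.i.d.\ samples from a Borel probability measure on a separable metric space, then the empirical measures $\mu^N := \tfrac{1}{N}\sum_{i=1}^N \delta_{p_i^N}$ converge weakly to $\overline V$ almost surely. (The usual argument picks a countable convergence-determining family of bounded continuous functions on the separable space $M$ and applies the strong law of large numbers to each; null sets are then countably unioned.) Combining this with the previous step yields $W_1(\mu^N,\overline V)\to 0$ almost surely.

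No serious obstacle is expected here: compactness of $M$ rules out the tail-mass issues that normally force one to prove $W_1$-convergence separately from weak convergence. If instead of invoking Varadarajan one wanted a self-contained argument, the Kantorovich--Rubinstein dual form combined with a finite $\varepsilon$-net in the Arzel\`a--Ascoli-compact ball of $1$-Lipschitz functions reduces the supremum to finitely many test functions, and the strong law of large numbers applied to each test function completes the proof.
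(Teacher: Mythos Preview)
Your argument is correct and considerably shorter than the paper's. The paper proceeds quantitatively: it bounds $\E[W_1(\mu^N,\overline V)]$ via the Kantorovich--Rubinstein dual form and a covering-number estimate for the class of bounded $1$-Lipschitz functions (obtaining $N(\mathscr F_{1,K},\|\cdot\|_\infty,\epsilon)\le\exp(c/\epsilon^d)$), then upgrades convergence in mean to almost-sure convergence by a concentration inequality and Borel--Cantelli. You instead observe that on a compact metric space $W_1$ metrizes weak convergence and then invoke Varadarajan's theorem, which settles the lemma immediately. Your route is more elegant for the bare qualitative statement; the paper's route has the advantage of yielding an explicit rate in expectation and exponential concentration, though the paper does not actually exploit this (the bandwidth $t_N$ is chosen \emph{a posteriori} to satisfy $W_1(\mu^N,\overline V)=o(t_N^{d/2+2})$, which only requires $W_1\to 0$). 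One small point worth making explicit: the paper's setup takes $p_i^N=p_i$ for a single i.i.d.\ sequence $(p_i)_{i\ge 1}$, so Varadarajan applies verbatim; your notation ``$(p_i^N)_{i\ge 1}$'' slightly obscures this.
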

\begin{proof}
The proof follows~\citet[Example 5.15]{handel2016prob} and the details for the manifold case can be found in~\citet[Section~3.4, ``Convergence of a random grid'']{vanGinkel/Redig:2018}. We give a brief sketch of the proof here.

First one shows that $\E [W_1(\mu^N,\,\overline V)]$ goes to $0$. To do so one needs to control
\begin{equation}\label{eq:kant}
    W_1(\mu^N,\,\overline V)=\sup_{f\in\mathscr{F}_1} \left(\int f\dd \mu^N - \int f\dd \overline V\right),
\end{equation}
where $\mathscr{F}_1$ denotes the set of Lipschitz functions on $M$ with Lipschitz constant at most $1$. Realizing that~\eqref{eq:kant} is invariant under addition of constants and that Lipschitz functions on a compact manifold are bounded, the proof can be reduced to functions taking values in $[0,K]$ for some $K$. We denote these Lipschitz functions by $\mathscr{F}_{1,K}$. Viewing~\eqref{eq:kant} as a supremum over random variables, one can apply~\citet[Lemma 5.7]{handel2016prob} to bound the expectation of~\eqref{eq:kant} by a covering number:
\begin{equation*}
    \E [W_1(\mu^N,\,\overline V)]\leq \inf_{\epsilon>0}\left\{ 2\epsilon+\sqrt{\frac{2K^2}{N} \log N(\mathscr{F}_{1,K},||\cdot||_\infty,\epsilon)}\right\}.
\end{equation*}
Here $N(\mathscr{F}_{1,K},||\cdot||_\infty,\epsilon)$ is the size of the smallest $\epsilon$-net in $\mathscr{F}_{1,K}$, where an $\epsilon$-net $\mathscr{G}$ is a set of functions such that every function in $\mathscr{F}_{1,K}$ is at most $\epsilon$ away from a function in $\mathscr{G}$ in the uniform distance.

Next, one proves that $N(\mathscr{F}_{1,K},||\cdot||_\infty,\epsilon)$ can be bounded by $\exp(c/\epsilon^d)$ for some constant $c$ and for $\epsilon$ small enough. This is done by constructing an $\epsilon$-net in an efficient way. 
Then, optimizing over $\epsilon$ and sending $N$ to infinity, one sees that $\E [W_1(\mu^N,\,\overline V)]$ goes to $0$.

Finally, one can use a concentration inequality (\citet[Theorem 3.11]{handel2016prob}) to show that the probability that $W_1(\mu^N,\,\overline V)$ deviates more than a constant from $0$ decays exponentially fast. With Borel--Cantelli, this implies almost sure convergence of $W_1(\mu^N,\,\overline V)$ to $0$.
\end{proof}

\subsubsection{Assumption~\ref{as:semigroup} holds}\label{subsubsec:as2holds} This Subsection is based on proving one key Proposition:
\begin{prop}\label{prop:unif_Delta_conv}
Set the bandwidth parameter $t'_N$ to satisfy~\eqref{eq:Wass_bound}. Then the graph laplacian $L_N$ on $V_N$ is such that for all $f\in W$ the following holds:
\begin{equation*}
    \lim_{N\to\infty}\left\|L_N f|_{V^N} - \big((-\Delta_M)f\big)|_{V_N}\right\|_{\infty,N}= 0.
\end{equation*}
\end{prop}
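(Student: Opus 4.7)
The idea is to realise $L_N$, in disguise, as a Riemann-sum approximation of the continuum difference quotient $-(S_{t_N}-\mathrm{id})/t_N$, and then exploit the fact that this semigroup difference quotient converges to $-\Delta_M$ as $t_N\to 0$. Writing out the prescribed weights,
\begin{equation*}
    L_N f(v) = -\frac{1}{t_N}\cdot\frac{1}{N}\sum_{w\in V_N} p_{t_N}(v,w)\bigl(f(w)-f(v)\bigr),
\end{equation*}
and, using that $\int_M p_{t_N}(v,q)\,\dd\overline V(q)=1$, the corresponding continuum object is
\begin{equation*}
    S_{t_N} f(v)-f(v) = \int_M p_{t_N}(v,q)\bigl(f(q)-f(v)\bigr)\,\dd\overline V(q).
\end{equation*}
Setting, for $v\in V_N$,
\begin{equation*}
    E_1(v):=\left|L_N f(v)+\frac{1}{t_N}\bigl(S_{t_N}f(v)-f(v)\bigr)\right|,\qquad E_2(v):=\left|\frac{S_{t_N}f(v)-f(v)}{t_N}-\Delta_M f(v)\right|,
\end{equation*}
the proposition will follow by the triangle inequality once $\|E_1\|_{\infty,N}\to 0$ and $\|E_2\|_{\infty,N}\to 0$.

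The semigroup consistency bound on $E_2$ is standard: for $f\in C^\infty(M)$, the identity $S_t f-f=\int_0^t S_s\Delta_M f\,\dd s$ gives
\begin{equation*}
    \|E_2\|_{\infty,N}\le\sup_{0\le s\le t_N}\|S_s\Delta_M f-\Delta_M f\|_\infty,
\end{equation*}
which vanishes as $t_N\to 0$ by strong continuity of the heat semigroup on $C(M)$.

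The Riemann-sum error $E_1$ is where the Wasserstein assumption enters. For fixed $v$, set $h_v(q):=p_{t_N}(v,q)(f(q)-f(v))$; then Kantorovich duality gives
\begin{equation*}
    E_1(v)\le \frac{1}{t_N}\,\mathrm{Lip}(h_v)\,W_1(\mu^N,\overline V).
\end{equation*}
The task thus reduces to bounding $\mathrm{Lip}(h_v)$ uniformly in $v\in M$. Combining the classical short-time heat-kernel estimates $\|p_t(v,\cdot)\|_\infty\lesssim t^{-d/2}$ and $\|\nabla_q p_t(v,\cdot)\|_\infty\lesssim t^{-(d+1)/2}$ with $\|f-f(v)\|_\infty\le\mathrm{diam}(M)\|\nabla f\|_\infty$ and the Leibniz-type inequality $\mathrm{Lip}(\alpha\beta)\le\|\alpha\|_\infty\mathrm{Lip}(\beta)+\|\beta\|_\infty\mathrm{Lip}(\alpha)$, one obtains $\mathrm{Lip}(h_v)\le C_f\,t_N^{-(d+1)/2}$ with $C_f$ independent of $v$. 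Hence
\begin{equation*}
    \|E_1\|_{\infty,N}\lesssim t_N^{-(d+3)/2}\,W_1(\mu^N,\overline V),
\end{equation*}
which tends to $0$ by~\eqref{eq:Wass_bound} since $d/2+2>(d+3)/2$ and $t_N\to 0$.

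The main obstacle is pinning down the uniform-in-$v$ short-time gradient bound on the heat kernel with the sharp power $t^{-(d+1)/2}$; this is a classical Li--Yau-type estimate on compact manifolds, but it is precisely what dictates the exponent $d/2+2$ appearing in~\eqref{eq:Wass_bound}. Note that the zero-average condition $f\in W$ is not really used in the argument itself, since both $L_N$ and $\Delta_M$ annihilate constants; it becomes essential only later, when this proposition is combined (via a Trotter--Kurtz-type step) with~Assumption~\ref{as:empirical} to deduce Assumption~\ref{as:semigroup}.
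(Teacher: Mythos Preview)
Your proof is correct and follows essentially the same decomposition as the paper: split $L_N f(v)-(-\Delta_M f)(v)$ into a Riemann-sum error (your $E_1$, the paper's $\int g^{t',i}\,\dd(\mu^N-\overline V)$) controlled via Kantorovich duality and a Lipschitz bound on $q\mapsto p_t(v,q)(f(q)-f(v))$, plus a semigroup-consistency error (your $E_2$, the paper's $\tfrac{S_{t'}-\1}{t'}f\to\Delta_M f$). The only substantive difference is the heat-kernel gradient estimate: the paper uses the cruder bound $\mathrm{Lip}\,p_t(v,\cdot)\lesssim t^{-d/2-1}$ (obtained from Engoulatov's $|\nabla\log p_t|\lesssim D/t$), and it is \emph{that} power, not your sharper $t^{-(d+1)/2}$, which exactly produces the exponent $d/2+2$ in~\eqref{eq:Wass_bound}; your bound would in fact permit the weaker hypothesis $W_1=o(t^{(d+3)/2})$, so your closing remark attributing $d/2+2$ to the sharp gradient estimate is slightly off.
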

In order to prove Proposition~\ref{prop:unif_Delta_conv} we begin with a few remarks based on the approach of~\citet[Section~3.2]{vanGinkel/Redig:2018}, which we recall here for completeness. Choose $i\in\{1,\,\ldots,\,N\}$. We see that 
\[
-L_N f(p_i^N)=\int_M g^{t'_N,\,i}(p) \mu^N(\dd p)
\]
where 
\[
g^{t'_N,\,i}(p):=\frac{p_{t'_N}(p,\,p_i^N)}{t'_N}(f(p)-f(p_i^N)),\quad p\in M.
\]
To avoid cumbersome notation we will now drop the $N$ sub/superscript in $t'_N$ and $p_i^N$. It is clear that one can write
\begin{equation}\label{eq:break_lapl}
-L_N f(p)=\int_M g^{t',\,i}(p) \overline V(\dd p)+\int_M g^{t',\,i}(p) (\mu^N-\overline V)(\dd p).
\end{equation}
The strategy of the proof consists in showing that the first term converges to $(-\Delta_M) f$, and the second one becomes negligible in the limit $N\to\infty$. To this purpose, we need a bound on the supremum norm and the Lipschitz constant of the heat kernel. In the following we use $L_f$ to denote the Lipschitz constant of a function $f$.
\begin{lemma}\label{lem:bound_Lip_HK}
For $t$ small enough one has
\[
\sup_{x,\,y\in M}|p_t(x,\,y)|\le C t^{-\frac{d}{2}}
\]
and
\[
\sup_{x,\,y\in M}L_{p_t(x,\,y)}\le C t^{-\frac{d}{2}-1}
\]
where $C$ depends only on the curvature of the manifold and on the dimension.
\end{lemma}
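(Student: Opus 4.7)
The plan is to derive both bounds from the classical short-time asymptotic expansion of the heat kernel (the Minakshisundaram--Pleijel parametrix expansion) on a compact Riemannian manifold. Recall that there exist smooth functions $u_0,u_1,\ldots$ on $M\times M$, depending only on the metric and its derivatives, such that for every $k\in\N$ and $t$ sufficiently small, uniformly in $(x,y)\in M\times M$,
\begin{equation*}
p_t(x,y) = (4\pi t)^{-d/2}\, e^{-d(x,y)^2/(4t)}\, \bigl(u_0(x,y) + t\, u_1(x,y) + \cdots + t^k u_k(x,y)\bigr) + R_k(t,x,y),
\end{equation*}
where the remainder $R_k$ together with its spatial derivatives of any fixed order is $o(t^{-d/2})$ uniformly in $(x,y)$ provided $k$ is chosen large enough. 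Compactness of $M$ ensures that all coefficients $u_j$ and all their derivatives are uniformly bounded by constants depending only on $d$ and on curvature quantities of $M$.

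The first inequality will follow immediately from this: the Gaussian factor is bounded by $1$, the leading coefficient $u_0$ is uniformly bounded, and the remaining contributions are of higher order in $t$. So $p_t(x,y)\le C t^{-d/2}$ for $t$ small, with $C$ depending only on the dimension and curvature.

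For the Lipschitz bound I would differentiate the expansion in $y$ (the $x$ case following from the symmetry $p_t(x,y)=p_t(y,x)$). The only problematic term comes from differentiating the Gaussian factor, producing $\bigl(d(x,y)/(2t)\bigr)\nabla_y d(x,y)\cdot e^{-d(x,y)^2/(4t)}$. I would then use the elementary inequality $r\,e^{-r^2/(4t)}\le C\sqrt{t}$ (the left-hand side is maximized at $r=\sqrt{2t}$) to conclude that this contribution is bounded by $C\, t^{-d/2-1/2}$, which is in fact strictly stronger than the required $t^{-d/2-1}$ for $t$ small. Derivatives of the coefficients $u_j$ and of the remainder $R_k$ yield only lower-order terms, so assembling everything gives a uniform Lipschitz constant of order $t^{-d/2-1}$ as claimed.

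The main obstacle is a clean uniform control of the remainder $R_k$ and its first spatial derivatives over $M\times M$. This is standard but requires care: the parametrix is built by iterated Duhamel formula (see for instance Chapter~7 of~\citet{grigoryan2009heat}), and after finitely many iterations the remainder acquires any desired regularity, while compactness of $M$ converts pointwise smoothness of the metric into uniform bounds with constants depending only on the dimension and on bounds for the curvature and its covariant derivatives. Alternatively, one could shortcut the whole argument by invoking directly the Li--Yau Gaussian and gradient estimates for the heat kernel on a manifold with Ricci curvature bounded below, which give both inequalities in one stroke.
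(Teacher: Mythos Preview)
Your parametrix argument is correct and in fact yields the sharper exponent $t^{-d/2-1/2}$ for the gradient, stronger than the $t^{-d/2-1}$ claimed. The paper takes precisely the alternative you mention at the end: for the sup bound it invokes the Li--Yau Gaussian estimate together with a uniform lower volume bound $V(x,\sqrt t)\ge C t^{d/2}$, and for the Lipschitz constant it uses Engoulatov's estimate on $\nabla\log p_t$, writes $\nabla p_t=(\nabla\log p_t)\,p_t$, and bounds the Gaussian factor by a constant. That route is shorter and needs only a lower Ricci bound and the diameter, at the price of the weaker (but sufficient) exponent. Your approach is more self-contained and delivers the sharp short-time exponent, but carries more bookkeeping: the $u_j$ are only canonically defined within the injectivity radius, so a cutoff is implicit in the expansion and the off-diagonal/cut-locus region has to be handled separately (where the kernel is exponentially small anyway). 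Either way the lemma follows.
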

\begin{proof}
Let us first recall the classical Gaussian bound on the heat kernel \cite[Corollary 3.1]{Li/Yau:1986}: 
\begin{equation}\label{eq:boundLY}
p_t(x,\,y)\le C \frac{\e^{-\frac{d^2(x,\,y)}{C t}+C K t}}{\sqrt{V(x,\,\sqrt t)V(y,\,\sqrt t)}}
\end{equation}
where $K\ge 0$ is such that $\Ric(M)\ge -K$ and where $V(x,\,r)$ denotes the volume of the ball around $x\in M$ with radius $r>0$ in the geodesic distance. Note that such $K$ exists in our situation, since $M$ is compact. A simple argument (comparing with a space of constant curvature) shows that there is a $C>0$ that does not depend on $x$ such that $\inf_{x\in M}V(x,\,\sqrt t)\ge C t^{d/2}>0$ for every $x$ when $t$ is small enough. This immediately entails the sup-norm bound for the function $p_t(\cdot,\,\cdot)$.
As far as the gradient is concerned, we use the bound in \citet[Theorem 1]{Engoulatov:2006} to deduce that
\begin{align}
\nabla p_t(x,\,y)&=\nabla \log p_t(x,\,y)\cdot p_t(x,\,y)\nonumber\\
&\stackrel{\eqref{eq:boundLY}}{\le} C(R,\,d)\left(\frac{D}{t}+\frac1{\sqrt t}+K\sqrt t\right)\left(\frac{\e^{-\frac{d^2(x,\,y)}{C t}+C K t}}{\sqrt{V(x,\,\sqrt t)V(y,\,\sqrt t)}}\right)\label{eq:bound_grad_HK}
\end{align}
and $D:=\mathrm{diam}(M)<\infty.$ Bounding the exponential term by an absolute constant and plugging this in \eqref{eq:bound_grad_HK} one obtains that
\[
\nabla p_t(x,\,y)\le C \left(\frac{D}{t}+\frac1{\sqrt t}+R\sqrt t\right)t^{-\frac{d}{2}}
\]
which concludes the proof.
\end{proof}
This entails easily that the second summand on the right-hand side of~\eqref{eq:break_lapl} goes to zero as $t'$ goes to zero, namely one can derive the following.
\begin{cor}\label{cor:second_term_0} Uniformly over $i\in\{1,\,\ldots,\,N\}$ one has
\[
\lim_{t\to 0}\left|\int_M g^{t',\,i}(p) \dd (\mu^N-\overline V)(\dd p)\right|=0.
\]
\end{cor}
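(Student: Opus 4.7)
The plan is to apply Kantorovich--Rubinstein duality, which tells us that for any Lipschitz function $g$ on $M$,
\[
\left|\int_M g\,\dd(\mu^N-\overline V)\right|\le L_g\cdot W_1(\mu^N,\overline V),
\]
and then to bound the Lipschitz constant of $g^{t',i}$ using Lemma~\ref{lem:bound_Lip_HK}, uniformly in $i$. Since the right-hand side depends on $i$ only through $L_{g^{t',i}}$, this will deliver the required uniformity over $i\in\{1,\ldots,N\}$.

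Concretely, I would write $g^{t',i}=h_1\cdot h_2$ with $h_1(p):=p_{t'}(p,p_i)/t'$ and $h_2(p):=f(p)-f(p_i)$, and use the standard product bound $L_{h_1h_2}\le L_{h_1}\|h_2\|_\infty+L_{h_2}\|h_1\|_\infty$. By Lemma~\ref{lem:bound_Lip_HK} (applied uniformly over the second argument $p_i$),
\[
\|h_1\|_\infty\le C(t')^{-d/2-1},\qquad L_{h_1}\le C(t')^{-d/2-2},
\]
for $t'$ small enough. Since $f\in W\subset C^\infty(M)$ and $M$ is compact, $\|h_2\|_\infty\le 2\|f\|_\infty$ and $L_{h_2}=L_f<\infty$. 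Combining,
\[
L_{g^{t',i}}\le 2\|f\|_\infty\, C(t')^{-d/2-2}+L_f\, C(t')^{-d/2-1}\le C'(t')^{-d/2-2}
\]
for $t'$ small, with $C'$ independent of $i$.

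Plugging this into Kantorovich--Rubinstein and invoking the bandwidth assumption~\eqref{eq:Wass_bound}, namely $W_1(\mu^N,\overline V)=o\bigl((t'_N)^{d/2+2}\bigr)$, we get
\[
\sup_{i}\left|\int_M g^{t'_N,i}\,\dd(\mu^N-\overline V)\right|\le C'(t'_N)^{-d/2-2}\cdot W_1(\mu^N,\overline V)\xrightarrow{N\to\infty}0,
\]
which is the desired conclusion (interpreting the limit $t\to 0$ in the statement as the coupled limit $N\to\infty$, $t'_N\to 0$, along the bandwidths chosen to satisfy~\eqref{eq:Wass_bound}).

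The only non-routine ingredient is tracking the correct power of $t'$ in the Lipschitz bound, and this is exactly where the exponent $d/2+2$ in~\eqref{eq:Wass_bound} is engineered to compensate: one extra $t'^{-1}$ from the prefactor $1/t'$, $t'^{-d/2}$ from the sup-norm of the heat kernel, and an additional $t'^{-1}$ from differentiating the heat kernel via Engoulatov's gradient estimate. No further obstacle arises; the argument is uniform in $i$ because all bounds in Lemma~\ref{lem:bound_Lip_HK} are uniform in the base point.
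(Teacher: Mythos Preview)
Your proof is correct and essentially identical to the paper's: both bound the Lipschitz constant of $g^{t',i}$ via the product rule using Lemma~\ref{lem:bound_Lip_HK}, then apply Kantorovich--Rubinstein and the bandwidth condition~\eqref{eq:Wass_bound}. The only cosmetic difference is that the paper keeps the factor $1/t'$ outside and applies the Lemma directly to $p_{t'}(\cdot,p_i)$, whereas you absorb $1/t'$ into $h_1$; the resulting bound $L_{g^{t',i}}\le C'(t')^{-d/2-2}$ and the conclusion are the same.
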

\begin{proof}
Observe that
\[
L_{g^{t',\,i}}\le \frac{1}{t'}\left(L_{p_{t'}(\cdot,\,p_i)}\|f(\cdot)-f(p_i)\|_\infty+\|p_{t'}(\cdot,\,p_i)\|_\infty L_{f(\cdot)-f(p_i)}\right).
\]
Note that $L_f<\infty$ exists since $f$ is smooth and that $L_{f(\cdot)-f(p_i)}=L_f$ since $f(p_i)$ is a constant. Therefore 
\begin{align*}
&\left|\int_M g^{t',\,i}(p)  (\mu^N-\overline V)(\dd p)\right|\\
&\le \frac{1}{t'}\left(L_{p_{t'}(\cdot,\,p_i)}\|f(\cdot)-f(p_i)\|_\infty+\|p_{t'}(\cdot,\,p_i)\|_\infty L_{f}\right) W_1(\mu^N,\,\overline V)\\
&\le \frac{C}{t'}\left((t')^{-\frac{d}{2}-1}C\|f\|_\infty+(t')^{-\frac{d}{2}} L_{f}\right) W_1(\mu^N,\,\overline V)
\end{align*}
where in the last line we have used Lemma~\ref{lem:bound_Lip_HK}. The conclusion is a consequence of~\eqref{eq:Wass_bound}. Uniformity follows since the bounds do not depend on $i$.
\end{proof}
We can now begin with the proof of Proposition~\ref{prop:unif_Delta_conv}.
\begin{proof}[Proof of Proposition~\ref{prop:unif_Delta_conv}] Considering the break-up of the graph laplacian as in~\eqref{eq:break_lapl} and Corollary~\ref{cor:second_term_0} (remember that $t'=t'_N$ is infinitesimal as $N$ grows), all that is left to show is that
\[
\lim_{N\to\infty}\sup_{1\le i\le N}\left|(-\Delta_M) f(p_i)-\int_M g^{t',\,i}(p)\overline V(\dd p)\right|=0.
\]
Now observe that
\[
\int_M g^{t',\,i}(p)\overline V(\dd p)=-\left(\frac{\1-S_{t'}}{t'}f\right)(p_i).
\]
Since $\Delta_M$ generates $(S_t,t\geq 0)$, we know for any smooth $f$ that
\[
\left(\frac{S_{t'}-\1}{t'}f\right)(p) \rightarrow \Delta_M f(p)
\]
uniformly in $p\in M$ as $t'$ goes to $0$ (see for instance~\citet[Theorem 7.13]{grigoryan2009heat}), so in particular uniformly in the $p_i$'s. Since $t'$ goes to $0$ as $N$ goes to infinity, this concludes the proof.
\end{proof}

As a consequence we obtain the following. 
\begin{cor}\label{cor:convSem}For all $t>0$ and $f\in W$
\begin{equation*}
    \lim_{N\to\infty}\|S_t^N f|_{V_N} - (S_tf)|_{V_N}\|_{\infty,N}= 0.
\end{equation*}
\end{cor}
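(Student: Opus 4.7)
The plan is to derive the semigroup convergence from the generator convergence in Proposition~\ref{prop:unif_Delta_conv} via a Duhamel/variation-of-parameters identity that couples the evolutions on $V_N$ and on $M$. Fix $f \in W$ and $t > 0$, and introduce the interpolation $h(s) := S_{t-s}^N\bigl[(S_s f)|_{V_N}\bigr]$ for $s \in [0,t]$, so that $h(0) = S_t^N(f|_{V_N})$ and $h(t) = (S_t f)|_{V_N}$. Since $S_{t-s}^N = \e^{-(t-s)L_N}$ commutes with $L_N$ and $\frac{\dd}{\dd s}(S_s f)|_{V_N} = (\Delta_M S_s f)|_{V_N}$, differentiating in $s$ and integrating yields
\[
(S_t f)|_{V_N} - S_t^N(f|_{V_N}) = \int_0^t S_{t-s}^N\bigl[L_N (S_s f)|_{V_N} - (-\Delta_M S_s f)|_{V_N}\bigr]\,\dd s.
\]

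Next, since $S_{t-s}^N$ is a Markov semigroup and therefore a contraction on $\|\cdot\|_{\infty,N}$, we get
\[
\bigl\|(S_t f)|_{V_N} - S_t^N(f|_{V_N})\bigr\|_{\infty,N} \le \int_0^t \bigl\|L_N(S_s f)|_{V_N} - (-\Delta_M S_s f)|_{V_N}\bigr\|_{\infty,N}\,\dd s.
\]
Because the heat semigroup on the compact manifold $M$ preserves both smoothness and zero average, $S_s f \in W$ for every $s \ge 0$, and Proposition~\ref{prop:unif_Delta_conv} guarantees that the integrand converges to zero pointwise in $s$.

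To conclude, I will upgrade this to uniform-in-$s$ convergence. Inspection of the proof of Proposition~\ref{prop:unif_Delta_conv} (and of Corollary~\ref{cor:second_term_0} together with the convergence rate of $(S_{t'_N} - \id)/t'_N$ to $\Delta_M$) reveals that the rate depends only on $\|g\|_\infty$, the Lipschitz constant $L_g$, and on higher $C^k$-seminorms of the tested function $g$. For $g = S_s f$ these quantities are controlled uniformly in $s \in [0,t]$: contractivity gives $\|S_s f\|_\infty \le \|f\|_\infty$, and the smoothing effect of the heat kernel together with gradient bounds analogous to Lemma~\ref{lem:bound_Lip_HK} yield $\sup_{s\in[0,t]}\|S_s f\|_{C^k(M)}<\infty$ for every $k$. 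Hence the integrand in the display above is bounded by a quantity $\epsilon_N$ independent of $s$ that tends to zero, whence the whole integral is bounded by $t\epsilon_N \to 0$.

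The main obstacle is exactly this uniform-in-$s$ upgrade: one must make explicit how the rate in Proposition~\ref{prop:unif_Delta_conv} depends on the $C^k$-norms of the tested function and then combine this with the precompactness of the trajectory $\{S_s f : s \in [0,t]\}$ in $C^\infty(M)$. Once this regularity bookkeeping is carried out, the remainder of the argument is immediate.
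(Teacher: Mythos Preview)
Your Duhamel argument is correct and the identification of the uniformity-in-$s$ step as the crux is accurate; once you note that the bound in Corollary~\ref{cor:second_term_0} depends on $g$ only through $\|g\|_\infty$ and $L_g$, while the approximation $\|(S_{t'_N}-\1)g/t'_N-\Delta_M g\|_\infty\le \tfrac{t'_N}{2}\|\Delta_M^2 g\|_\infty$ (obtained by writing $(S_{t'}-\1)g=\int_0^{t'}S_r\Delta_M g\,\dd r$ and iterating) depends only on $\|\Delta_M^2 g\|_\infty$, the required uniform bound over $g=S_sf$ follows from $\|S_sf\|_\infty\le\|f\|_\infty$, $\|\Delta_M^2 S_sf\|_\infty=\|S_s\Delta_M^2 f\|_\infty\le\|\Delta_M^2 f\|_\infty$, and the compactness of $\{S_sf:s\in[0,t]\}$ in $C^1(M)$.

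The paper, by contrast, does not carry out this bookkeeping at all: it simply invokes the Trotter--Kurtz approximation theorem (Theorem~2.1 of \cite{kurtz1969extensions}), which packages exactly the passage from generator convergence (Proposition~\ref{prop:unif_Delta_conv}) to semigroup convergence as a black box, together with the observation that $C^\infty(M)$ is a core for $\Delta_M$. Your route is more elementary and self-contained---it essentially reproves the relevant special case of Trotter--Kurtz---while the paper's route is a one-line citation but requires the reader to know or look up that theorem and verify its hypotheses (in particular the ``extended limit'' identification). Both are perfectly valid; yours has the pedagogical merit of making the mechanism visible, the paper's the merit of brevity.
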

\begin{proof}
The proof is a direct application of Theorem 2.1 from~\cite{kurtz1969extensions} and Proposition~\ref{prop:unif_Delta_conv}, combined with an argument that the extended limit of $L_N$ (as defined in Kurtz's paper) equals the Laplace-Beltrami operator. The reason is that they are both generators and they agree on the set of smooth functions (by Proposition~\ref{prop:unif_Delta_conv} they agree on $W$ and it is easy to see that they are both $0$ on constant functions), which forms a core for the Laplace-Beltrami operator.
\end{proof}
We are now ready to show Assumption~\ref{as:semigroup}.
\begin{prop}\label{prop:a2holds}
For all $f\in W$, Assumption~\ref{as:semigroup} holds.
\end{prop}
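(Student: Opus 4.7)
The plan is to reduce the convergence of the bilinear form on $V_N$ to a continuous integration statement by pulling the discrete semigroup back to a function on $M$ via Corollary~\ref{cor:convSem}, and then using Assumption~\ref{as:empirical} (already verified in the previous subsubsection) to pass from empirical sums to integrals against $\overline V$.

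First I would decompose $f_N = f|_{V_N} - c_N \1$, where $c_N := \frac{1}{N}\sum_{i=1}^N f(p_i^N)$. Since the constant function is in the kernel of $L_N$, we have $S_t^N \1 = \1$, and therefore
\[
S_t^N f_N = S_t^N f|_{V_N} - c_N \1.
\]
Now Corollary~\ref{cor:convSem} gives a sequence $\varepsilon_N : V_N \to \R$ with $\|\varepsilon_N\|_{\infty,N} \to 0$ such that $S_t^N f|_{V_N} = (S_t f)|_{V_N} + \varepsilon_N$. Substituting, we get
\[
\frac{1}{N}(f_N,\,S_t^N f_N) = \frac{1}{N}\sum_{i=1}^N \bigl(f(p_i^N)-c_N\bigr)\bigl((S_t f)(p_i^N)-c_N+\varepsilon_N(p_i^N)\bigr).
\]

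Next I would show that the error term involving $\varepsilon_N$ is negligible: since $f$ is continuous on the compact manifold $M$, it is bounded, so $|f_N(p_i^N)|$ is bounded uniformly in $N$ and $i$, and hence
\[
\left|\frac{1}{N}\sum_{i=1}^N (f(p_i^N)-c_N)\varepsilon_N(p_i^N)\right| \le \|\varepsilon_N\|_{\infty,N}\cdot \frac{1}{N}\sum_{i=1}^N |f_N(p_i^N)|\xrightarrow{N\to\infty} 0.
\]
For the remaining piece, expand the product $(f(p_i^N)-c_N)((S_t f)(p_i^N)-c_N)$ into four terms. Since $f$ and $S_t f$ are continuous on $M$, Assumption~\ref{as:empirical} yields
\[
\frac{1}{N}\sum_{i=1}^N f(p_i^N)(S_t f)(p_i^N)\to (f,\,S_t f),\quad \frac{1}{N}\sum_{i=1}^N f(p_i^N)\to 0,\quad \frac{1}{N}\sum_{i=1}^N (S_t f)(p_i^N)\to 0,
\]
where the latter two use $\int_M f\,\dd\overline V=0$ and $\int_M S_t f\,\dd\overline V = (S_t f,\,\1) = (f,\,S_t \1)=(f,\,\1)=0$ (since the heat semigroup is self-adjoint in $L^2(\overline V)$ and preserves constants). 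The same argument gives $c_N \to 0$, so $c_N^2\to 0$. Summing the contributions yields the claimed limit $(f,\,S_t f)$.

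I do not expect any step to be a serious obstacle: the only mildly subtle point is making sure that $S_t^N$ fixes constants so that the decomposition $S_t^N f_N = S_t^N f|_{V_N}-c_N\1$ makes sense, and that the uniform error $\varepsilon_N$ supplied by Corollary~\ref{cor:convSem} truly gets absorbed against the uniformly bounded factor $|f_N|$. Both are immediate from the preceding material.
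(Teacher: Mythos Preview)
Your proposal is correct and follows essentially the same route as the paper: both arguments split off the constant $c_N=\bar f^N$ using $S_t^N\1=\1$, invoke Corollary~\ref{cor:convSem} to replace $S_t^N f|_{V_N}$ by $(S_t f)|_{V_N}$ up to a uniformly small error, and then use Assumption~\ref{as:empirical} (together with $\int f\,\dd\overline V=\int S_t f\,\dd\overline V=0$) to convert the empirical sums to the limiting integral $(f,S_tf)$. The only difference is organizational---the paper expands $(f_N,S_t^N f_N)$ into three bilinear pieces before applying the corollary, whereas you substitute the corollary first and expand afterwards---but the ingredients and logic are identical.
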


\begin{proof}
Denote $f|_{V_N}$ by $f|_N$ and ${1}/{N}\sum_{i=1}^N f(p_i)$ (both the constant and the constant function) by $\bar f^N$. Then $f_N=f|_N-\bar f^N$, which implies that
\begin{equation}\label{eq:decompo}
    (f_N,S_t^Nf_N) = (f_N,S_t^Nf|_N)-(f|_N,S_t^N\bar f^N) +(\bar f^N,S_t^N\bar f^N).
\end{equation}
Since $\bar f^N$ is constant, $S_t^N\bar f^N = \bar f^N$. Thus we see for the second summand above that
\begin{equation*}
\frac{1}{N}(f|_N,S_t^N\bar f^N)= \frac{1}{N}(f|_N,\bar f^N) = \frac{1}{N}\sum_{i=1}^N f(p_i)\frac{1}{N}\sum_{j=1}^Nf(p_j)\rightarrow \int_M f\dd\bar V\int_M f\dd\bar V=0.
\end{equation*}
For the same reason, we see for the third summand in~\eqref{eq:decompo}
\begin{equation*}
\frac{1}{N}(\bar f^N,S_t^N\bar f^N)= \frac{1}{N}(\bar f^N,\bar f^N) 
\rightarrow \int_M f\dd\bar V\int_M f\dd\bar V=0.
\end{equation*}
Now we deal with the first summand of the right-hand side of~\eqref{eq:decompo}:
\begin{equation}
    (f_N,S_t^Nf|_N) = (f_N,(S_tf)|_N)+(f_N,S_t^Nf|_N-(S_tf)|_N)\label{eq:decompo2}
\end{equation}
The first term gives
\begin{eqnarray*}
    \frac{1}{N}(f_N,(S_tf)|_N) &=& \frac{1}{N}(f|_N,(S_tf)|_N)-\frac{1}{N}(\bar f^N,(S_tf)|_N) \\
    &=& \frac{1}{N} \sum_{i=1}^N f(p_i)S_tf(p_i) - \frac{1}{N}\sum_{i=1}^N \bar f^N S_tf(p_i) \\
    &\longrightarrow& \int_M fS_tf \dd \bar V - \int_M f \dd V \int_M S_tf \dd V = (f,S_tf)-0.
\end{eqnarray*}
Now we need to show that the last term in the right-hand side of~\eqref{eq:decompo2} goes $0$. Note that
\begin{equation}\label{eq:decompo3}
    |(f_N,S_t^Nf|_N-(S_tf)|_N)|\leq \sum_{i=1}^N |f_N(p_i)| ||S_t^Nf|_N-(S_tf)|_N||_{N,\infty}.
\end{equation}
Recall that $||S_t^Nf|_N-(S_tf)|_N||_{N,\infty}\rightarrow 0$ by Corollary~\ref{cor:convSem}. Moreover,
\begin{equation*}
    \frac{1}{N}\sum_{i=1}^N |f_N(p_i)|\leq \frac{1}{N}\sum_{i=1}^N|f(p_i)| + \left|\bar f^N\right| \rightarrow \int_M |f|\dd \bar V + |\int f\dd \bar V| = \int |f|\dd \bar V<\infty.
\end{equation*}
Combining these results with~\eqref{eq:decompo3} yields
\begin{equation*}
    \limsup_{N\rightarrow\infty} \frac{1}{N}|(f_N,S_t^Nf|_N-(S_tf)|_N)|\leq \int |f|\dd \bar V\cdot 0=0.
\end{equation*}
We conclude that $(f,\, S_t f)$ is the only non-zero remaining term when taking the limit $N\to\infty$ in~\eqref{eq:decompo}, which was to be shown.
\end{proof}
\subsubsection{Assumption~\ref{as:gap} holds} \added[id=bart]{For this proof, we denote the graph laplacian as $L_N^t$, thus now highlighting the dependence on both $N$ and $t$:}
\begin{equation*}
    L_N^tf(v) = -\sum_{w\in V_N} \frac{p_t(v,w)}{Nt}(f(w)-f(v)).
\end{equation*}
\added[id=bart]{The idea is that, by letting first $N$ to infinity and then $t$ to $0$,  we prove that the spectral gaps $\lambda_{N,2}^t$ of $L^t_N$ converge to the spectral gap of the Laplace--Beltrami operator, i.e.}
\begin{equation*}
    \lim_{t\rightarrow 0}\lim_{N\rightarrow\infty} \lambda_{N,2}^t = \lambda_2.
\end{equation*}
\added[id=bart]{From this we will extract a sequence $t_N$ such that the spectral gap of $L_N=L_N^{t_N}$ converges (i.e. $\lambda_{N,2}\rightarrow \lambda_2).$ We will show that this sequence can be constructed in such a way that the convergence of Assumption~\ref{as:semigroup} still holds.}
We will base our proof on the ideas employed by~\cite{Belkin/Niyogi:2007} to prove convergence of the graph laplacian eigenmaps to the continuum ones. In the article, the authors use the ``intermediate'' operator $L^t:\,L^2(M)\to L^2(M)$, $t> 0$,  defined via
\[
L^t f(p):=t^{-1}\int_M p_t(p,\,q)(f(p)-f(q))\overline V(\dd q)
\]
whose eigenvalues we denote by $\lambda_1^t\le \lambda_2^t\le\ldots$ In their case, the heat kernel edge weights were replaced by the Gaussian kernel in some Euclidean ambient space. Instead, with our choices note that
\[
L^t=\frac{\1-S_t}{t}.
\]
Therefore the $i$-th eigenvalue of $L^t$ equals $t^{-1}(1-\exp(-t\lambda_i))$, with $\lambda_i$ the $i$-th eigenvalue of the Laplace--Beltrami, so in particular we see
\begin{equation}
    \lim_{t\to 0}\lambda^t_2= \lambda_2 \label{convint}.
\end{equation}

Using~\citet[Theorem~21, Proposition~23]{VL/B/B:2008} analogously to what is done by~\citet[Theorem~3.2]{Belkin/Niyogi:2007} one also obtains that
\begin{align}
    \lim_{N\to\infty}\lambda^t_{N,2} =\lambda^t_2 &\quad\text{a.s. }\label{convinN}
\end{align}
Note that this is an almost sure result in the law of the grid points. Since the intersection of two probability one sets still has probability one, we can safely assume that for the grid that was fixed in Remark~\ref{rem:almostsure} the limit above holds.

Now we want to construct a sequence $(t_N)_{N=1}^\infty$ such that we can reduce ~\eqref{convint}-~\eqref{convinN} to one limit:
\begin{equation}\label{eq:conv_t_N}
    \lim_{N\to\infty}\lambda^{t_N}_{N,2}=\lambda_2
\end{equation}
We constructed the sequence $(t'_N)_{N=1}^\infty$ in Subsubsection~\ref{subsubsec:as2holds} to prove pointwise convergence of the Laplacians. It is direct from those calculations that any sequence that goes to $0$ more slowly than $(t'_N)_{N=1}^\infty$ would also suffice. Therefore we first construct $(t_N)_{N=1}^\infty$ to ensure \eqref{eq:conv_t_N} and such that $t_N\geq t'_N$ for each $N$, after which we can simply replace $t'_N$ in Subsubsection~\ref{subsubsec:as2holds} by $t_N$.

\begin{lemma}
There exists a sequence $(t_N)_{N=1}^\infty$ such that the following hold: 
\begin{itemize}[label=\raisebox{0.25ex}{\tiny$\bullet$}]
    \item $t_N\downarrow 0$ as $N\to\infty$,
    \item $\lim_{N\to\infty}\lambda^{t_N}_{N,2}= \lambda_2,$
    \item $t_N\geq t'_N$ for every $N\in\N.$
\end{itemize}
\end{lemma}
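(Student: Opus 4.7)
The plan is a standard diagonal extraction that stitches together the two convergences \eqref{convint} and \eqref{convinN} while respecting the lower bound $t_N \ge t'_N$. First I would fix an auxiliary sequence $s_k \downarrow 0$ (for concreteness $s_k = 1/k$). By \eqref{convint} one has $\lambda^{s_k}_2 \to \lambda_2$ as $k\to\infty$. For each fixed $k$, applying \eqref{convinN} with $t=s_k$ yields an index $M_k$ such that
\[
\bigl|\lambda^{s_k}_{N,2} - \lambda^{s_k}_2\bigr| < \tfrac{1}{k} \qquad \text{for all } N\ge M_k.
\]
Since this is an almost-sure statement in the law of the grid but only countably many values of $t$ enter the construction, a single null set can be discarded and we may argue quenched as in Remark~\ref{rem:almostsure}.

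To also enforce the bandwidth lower bound, I would simultaneously require $M_k$ to be so large that $t'_N \le s_k$ for every $N\ge M_k$; this is possible because $t'_N \to 0$, which is guaranteed by \eqref{eq:Wass_bound} together with the almost-sure convergence $W_1(\mu^N,\bar V)\to 0$ established earlier in this subsection. Enlarging each $M_k$ if needed, I may assume $(M_k)$ is strictly increasing. I then define
\[
t_N := s_k \quad \text{for } N\in [M_k,M_{k+1}),
\]
and $t_N := \max\{s_1, t'_N\}$ for the finitely many $N < M_1$ so that $t_N \ge t'_N$ holds for every $N$.

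Finally I would verify the three bullet points. Monotonicity and $t_N\downarrow 0$ are immediate, since $t_N$ is eventually bounded above by every $s_k$; the inequality $t_N\ge t'_N$ holds by construction; and for $N\in [M_k,M_{k+1})$ the triangle inequality gives
\[
\bigl|\lambda^{t_N}_{N,2}-\lambda_2\bigr| \le \bigl|\lambda^{s_k}_{N,2}-\lambda^{s_k}_2\bigr| + \bigl|\lambda^{s_k}_2-\lambda_2\bigr| < \tfrac{1}{k} + \bigl|\lambda^{s_k}_2-\lambda_2\bigr|,
\]
which tends to $0$ as $N\to\infty$ (equivalently $k\to\infty$). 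No step in this argument is genuinely delicate; the only mild subtlety is making sure the a.s. set on which \eqref{convinN} holds is chosen before the diagonal extraction, which works precisely because only countably many bandwidths $s_k$ are sampled.
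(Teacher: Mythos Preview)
Your proposal is correct and is essentially the same diagonal extraction as the paper's own proof: the paper also takes $s_k=1/k$, chooses thresholds $n_j$ (your $M_k$) satisfying exactly your two conditions (closeness of $\lambda^{1/j}_{N,2}$ to $\lambda^{1/j}_2$ and $t'_N\le 1/j$), sets $t_N$ constant on each block $[n_j,n_{j+1})$, and concludes via the same triangle-inequality split. If anything, your version is slightly more careful in explicitly handling the finitely many $N<M_1$ and in noting that only countably many bandwidths are sampled so the almost-sure exceptional set can be chosen uniformly.
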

\begin{proof}
For $j\in\N$ choose $n_j$ such that:
\begin{enumerate}[label=(\roman*)]
    \item $n_j>n_{j-1}$ for $j\geq 2$,
    \item $|\lambda^{1/j}_{n,2}-\lambda^{1/j}_2|\leq {1}/{j}$ for all $n\geq n_j$,
    \item $n_j\geq \min\{k\in\N:t'_k\leq 1/j\}$.
\end{enumerate}
Such $n_j$ exists because of~(\ref{convinN}) and because $t'_N\rightarrow 0$. Now for $N\in\N$ define $j(N)\in\N$ such that
\begin{equation*}
    n_{j(N)}\leq N < n_{j(N)+1}
\end{equation*}
and set
\begin{equation*}
    t_N:= \frac{1}{j(N)}.
\end{equation*}
First of all $j(N)$ is well-defined for each $N$ because of (i). Moreover, we directly see that $j(N)\uparrow\infty$, so $t_N\downarrow 0$. Note that it follows from (iii) and the fact that $t'_N$ is decreasing that $t'_{n_j}\leq 1/j$. Using this and the monotonicity of $t'_N$, we see
\begin{equation*}
    t_N=\frac{1}{j(N)}\geq t'_{n_{j(N)}} \geq t'_N.
\end{equation*}
We also see
\begin{equation*}
    |\lambda^{t_N}_{N,2}-\lambda_2|\leq \underbrace{|\lambda^{t_N}_{N,2}-\lambda^{t_N}_2|}_\text{=:(I)} + \underbrace{|\lambda^{t_N}_{2}-\lambda_2|}_\text{=:(II)}.
\end{equation*}
(II) goes to $0$ because of~(\ref{convint}) and the fact that $t_N\downarrow 0$. Further we see
\begin{equation*}
    \text{(I)} = \Big|\lambda^{1/j(N)}_{N,2}-\lambda^{1/j(N)}_2\Big|\leq \frac{1}{j(N)},
\end{equation*}
because of (ii) and the assumption $N\geq n_{j(N)}$ by construction. Since $1/j(N)\rightarrow 0$, the result follows.
\end{proof}

\section{Convergence of the Voronoi extension}\label{subsec:vor}
In this Section we would like to state and prove Theorem~\ref{thm:vor}. The proof consists of two main blocks: tightness in $H^{-s}(M)$ and finite-dimensional convergence.

We start with the necessary definitions. 
\subsection{Definitions} 
For $s\ge 0$ we define the space $H^s:=H^s(M)$ as the closure of $W$ with respect to the norm
\[
\|f\|_s^2:=\sum_{j=2}^\infty \lambda_j^s\|P_j f\|_2^2=\sum_{j=2}^\infty \lambda_j^s (f,e_j)^2,
\]
and the corresponding inner product
\begin{equation*}
    (f,g)_s = \sum_{j=2}^\infty \lambda_j^s (f,e_j)(g,e_j),
\end{equation*}
where $(e_j,j\geq 0)$ is an $L^2(M)$-orthonormal basis of eigenfunctions of the Laplace-Beltrami operator. Note that all the $e_j$'s are smooth. We denote by $H^{-s}$ the Hilbert space dual of $H^s$. 

We will need the following properties.
\begin{lemma}
Our definition of $H^s$ coincides with the usual definition of Sobolev space on $M$ (as described in for instance~\citet[Section~6]{canzani2013analysis} for $s>0$, for $s<0$ they are just the dual of $H^{-s}$). Moreover the canonical norm on $H^{-s}$ induced by $H^s$ satisfies 
\[
\|\psi\|_{-s}^2=\sum_{j=2}^\infty
\lambda_j^{-s}\langle \psi,\,e_j \rangle^2,\quad \psi\in H^{-s}.\]
\end{lemma}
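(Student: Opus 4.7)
The plan is to derive both claims directly from the spectral decomposition of the Laplace--Beltrami operator. For the first claim, I would recall that, as a standard consequence of the spectral theorem applied to the essentially self-adjoint operator $-\Delta_M$ on a closed manifold, the usual Sobolev space $H^s(M)$ coincides (with equivalent norm) with the space of distributions $f$ for which $\sum_{j\geq 1}(1+\lambda_j)^s (f,e_j)^2<\infty$; see e.g.~\citet[Section~6]{canzani2013analysis}. Since every element of $W$ is orthogonal to the constant eigenfunction $e_1$ attached to $\lambda_1=0$, only the tail with $j\geq 2$ matters, and on that range $\lambda_j\geq \lambda_2>0$, so $(1+\lambda_j)^s$ and $\lambda_j^s$ are comparable. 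Consequently the norm $\|\cdot\|_s$ used in our definition is equivalent to the restriction of the usual Sobolev norm to zero-mean functions, and completing $W$ with respect to either gives the same Hilbert space.

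For the dual norm, I would first verify that $\tilde e_j:=\lambda_j^{-s/2}e_j$, $j\geq 2$, is an orthonormal basis of $H^s$. Orthonormality is a one-line computation from the definition of $(\cdot,\cdot)_s$ and the $L^2$-orthonormality of $(e_j)$. Completeness follows from the identity $(f,e_j)_s=\lambda_j^s(f,e_j)$: if $f\in H^s$ satisfies $(f,\tilde e_j)_s=0$ for all $j\geq 2$, then $(f,e_j)=0$ for all $j\geq 2$, hence $\|f\|_s=0$. Note also that every $e_j$ lies in $H^s$ since the $e_j$'s are smooth.

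With this basis in hand I would invoke Riesz representation in $H^s$: to each $\psi\in H^{-s}$ there corresponds a unique $f_\psi\in H^s$ with $\langle \psi,g\rangle=(f_\psi,g)_s$ for every $g\in H^s$, and the canonical dual norm satisfies $\|\psi\|_{-s}=\|f_\psi\|_s$. Testing against $g=e_j$ gives $\langle \psi,e_j\rangle=(f_\psi,e_j)_s=\lambda_j^s(f_\psi,e_j)$, so $(f_\psi,e_j)=\lambda_j^{-s}\langle \psi,e_j\rangle$, and Parseval in $H^s$ yields
\[
\|\psi\|_{-s}^2=\|f_\psi\|_s^2=\sum_{j\geq 2}\lambda_j^s(f_\psi,e_j)^2=\sum_{j\geq 2}\lambda_j^{-s}\langle \psi,e_j\rangle^2,
\]
which is the stated formula.

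The only conceptual step is the identification of the spectrally defined space with the classical Sobolev space, which I would simply cite; the rest is routine Hilbert-space manipulation. I do not anticipate any real obstacle beyond keeping careful track of the $j=1$ mode, which is automatically killed by the zero-mean restriction built into $W$.
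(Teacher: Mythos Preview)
Your proposal is correct and follows essentially the same approach as the paper: cite the spectral characterisation of Sobolev spaces for the first claim, then use Riesz representation together with the identity $\langle \psi,e_j\rangle=(f_\psi,e_j)_s=\lambda_j^s(f_\psi,e_j)$ and Parseval to obtain the dual-norm formula. Your version is slightly more explicit (you spell out the norm equivalence via $\lambda_j\geq\lambda_2>0$ and verify that $(\lambda_j^{-s/2}e_j)_{j\ge 2}$ is an orthonormal basis of $H^s$), but the argument is the same.
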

\begin{proof}
The first statement follows from \citet[Proposition~56]{canzani2013analysis}. 

For all $\psi\in  H^{-s}$ by Riesz representation theorem, there exists $f_\psi\in H^s$ such that $\langle \psi, g\rangle= (f_\psi, g)_s$ for all $g\in H^s$. Also by isometry we have that $\|\psi\|_{-s}=\|f_\psi\|_s$. Now note that
\begin{equation*}
    \langle \psi, e_j\rangle = (f_\psi,e_j)_s=\sum_{k=2}^\infty \lambda_k^s (f_\psi,e_k)(e_j,e_k) = \lambda_j^s (f_\psi,e_j)
\end{equation*}
Hence we have 
\begin{equation*}
    \|\psi\|_{-s}^2= \|f_\psi\|_{s}^2= \sum_{j= 2}^\infty \lambda_j^{s} (f_\psi, e_j)^2 = \sum_{j=2}^\infty \lambda_j^{-s} \langle v, e_j\rangle^2.
\end{equation*}
\end{proof}
Furthermore, we will need the following classical result to prove tightness (its proof is analogous to~\citet[Theorem~5.8]{roe2013elliptic}).
\begin{thm}[Rellich's theorem] \label{sobolevprops} If $s<t$ then the inclusion operator $H^{t}\hookrightarrow H^{s}$ is compact.
\end{thm}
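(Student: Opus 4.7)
The plan is to prove compactness of the inclusion $H^t \hookrightarrow H^s$ by showing that any bounded sequence in $H^t$ admits a subsequence that is Cauchy (hence convergent) in $H^s$. This follows the standard approach to Rellich-type embeddings, adapted to the spectral definition of $H^s$ given in the paper. The key analytic fact that makes the argument work is that the eigenvalues $(\lambda_j)_{j\geq 2}$ of $-\Delta_M$ on a compact manifold diverge to $+\infty$ (each eigenspace being finite-dimensional, a fact noted in Section~\ref{sec:overview} when recalling that the spectrum is discrete).

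Concretely, let $(f_n)_{n\in\N}$ be a sequence in $H^t$ with $\|f_n\|_t\leq C$ for all $n$. For each fixed $j\geq 2$, the coefficients $a_j^{(n)}:=(f_n,e_j)$ satisfy $|a_j^{(n)}|\leq C\lambda_j^{-t/2}$, so they form a bounded sequence in $\R$. By a standard diagonal extraction, I would pick a subsequence (still denoted $f_n$) such that $a_j^{(n)}\to a_j$ as $n\to\infty$ for every $j\geq 2$.

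To show $(f_n)$ is Cauchy in $H^s$, I would split the norm at a cutoff $J$:
\begin{equation*}
\|f_n-f_m\|_s^2 \;=\; \sum_{j=2}^{J}\lambda_j^{s}\bigl(a_j^{(n)}-a_j^{(m)}\bigr)^2 \;+\; \sum_{j>J}\lambda_j^{s}\bigl(a_j^{(n)}-a_j^{(m)}\bigr)^2.
\end{equation*}
For the tail, since $s<t$ and $\lambda_j>0$ is increasing, I would write $\lambda_j^{s}=\lambda_j^{s-t}\lambda_j^{t}\leq \lambda_{J+1}^{s-t}\lambda_j^{t}$ for every $j>J$, which yields
\begin{equation*}
\sum_{j>J}\lambda_j^{s}\bigl(a_j^{(n)}-a_j^{(m)}\bigr)^2 \;\leq\; \lambda_{J+1}^{s-t}\,\|f_n-f_m\|_t^2 \;\leq\; 4C^2\,\lambda_{J+1}^{s-t}.
\end{equation*}
Given $\varepsilon>0$, because $\lambda_{J+1}\to\infty$ and $s-t<0$, I fix $J$ large enough that this contribution is at most $\varepsilon/2$ uniformly in $n,m$. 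For the finite sum (with $J$ fixed), the coordinate convergence $a_j^{(n)}\to a_j$ makes each of the finitely many summands go to $0$, so for $n,m$ large the first block is also bounded by $\varepsilon/2$. This gives $\|f_n-f_m\|_s^2\leq \varepsilon$ eventually, so the subsequence is Cauchy in $H^s$, which is complete as a Hilbert space, and hence converges.

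The only genuine point to invoke beyond bookkeeping is the divergence $\lambda_j\to\infty$, which is exactly what is missing in the purely algebraic definition and which gives the inclusion its compactness; without it the tail estimate would fail. This is not an obstacle here because it is a classical consequence of the compactness of $M$ already recalled in the excerpt. Everything else is bounded, purely Hilbert-space manipulation with the spectral norm.
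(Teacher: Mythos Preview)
Your argument is correct: the diagonal extraction together with the low/high frequency split and the tail bound $\sum_{j>J}\lambda_j^{s}(a_j^{(n)}-a_j^{(m)})^2\leq \lambda_{J+1}^{s-t}\|f_n-f_m\|_t^2$ is exactly the standard proof of Rellich's lemma in the spectral setting, and the only analytic input, $\lambda_j\to\infty$, is indeed available here (it follows e.g.\ from Weyl's law, quoted later in the paper). One cosmetic point: the eigenvalues are only non-decreasing, not strictly increasing, but your inequality $\lambda_j^{s-t}\leq \lambda_{J+1}^{s-t}$ for $j>J$ only uses monotonicity, so nothing changes.

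As for comparison: the paper does not give its own proof of this statement at all; it simply records that the argument is analogous to \citet[Theorem~5.8]{roe2013elliptic}. Your write-up is precisely the kind of argument that reference contains, so there is no substantive difference in approach to discuss.
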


Now let $\{C^N_i,i=1,\,\ldots,\,N\}$ be the Voronoi tessellation corresponding to the vertex set $V_N:=(p_i)_{i=1}^N$, i.e.
\begin{equation*}
    C^N_i=\{p\in M: d(p,p_i)\leq d(p,p_j) \hspace{2mm} \forall\, j\leq N\},\quad i=1,\,\ldots,\,N.
\end{equation*}
Also denote $v^N_i=\overline V(C^N_i)$. We will usually leave out the superscript $N$ to ease notation.
\begin{defi}[The DGFF in $H^{-s}$] Let $\phi_N$ be the zero-average DGFF on $V_N$ as in Theorem~\ref{thm:1}. We define $\widetilde \phi_N\in H^{-s}$ by the following action on $f\in H^s$:
\begin{equation*}
    \left\langle \widetilde \phi_N, f\right\rangle := \frac{1}{N}\sum_{i=1}^N \phi_N(p_i)\frac{1}{v_i}\int_{C_i}f(p)\overline V(\dd p).
\end{equation*}
\end{defi}
Note that if we define 
\begin{eqnarray*}
\widetilde f_N:\,V_N&\rightarrow& \R\\
p_i & \mapsto& \widetilde f_N(p_i):=\frac{1}{v_i}\int_{C_i}f(p)\overline V(\dd p)
\end{eqnarray*}
then we can write 
\begin{equation}\label{eq:tilde_useful}
    \langle \widetilde \phi_N,f\rangle=N^{-1}(\phi_N,\widetilde f_N)=\langle \phi_N,\widetilde f_N\rangle
\end{equation}
with a slight abuse of notation (since $\phi_N$ acts on $W$, but in fact this action depends only on grid values).
In order to prove Theorem~\ref{thm:vor} first of all we will show that the sequence $\{\widetilde \phi_N,\,N\in\N\}$ is tight in $H^{-s}$ (Subsection~\ref{subsec:tight_vor}). From this it follows that every sequence has a convergent subsequence. Then what remains is to show that the limit is unique. Since the limit is Gaussian, it is characterized by its finite-dimensional distributions. By the theory of abstract Wiener spaces, already described for example in~\citet[Section~3.2]{CDH18}, it suffices to show that for all $f,\,g\in H^1$
\begin{equation*}
    \E \left(\langle \sqrt N\widetilde \phi_N, f\rangle\langle\sqrt N \widetilde \phi_N, g\rangle\right) \to (f,\,Gg)
\end{equation*}
as $N\to\infty$. This will be done in Subsection~\ref{subsec:fdd_vor}.
\subsection{Tightness of \texorpdfstring{$\tphi$}{}}\label{subsec:tight_vor}
We prove the following Proposition.
\begin{prop}
The collection $\{\widetilde \phi_N,N\in\N\}$ is tight in $H^{-s}$ for any $s>d-1/2$.
\end{prop}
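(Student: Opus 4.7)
The natural strategy is the classical Hilbert-space tightness argument: find a stronger Sobolev norm $H^{-t}$ with $d-1/2<t<s$, show that $\mathbb{E}[\|\sqrt N\widetilde\phi_N\|_{-t}^2]$ is bounded uniformly in $N$, and conclude tightness in $H^{-s}$ via Markov's inequality combined with Rellich's theorem~\ref{sobolevprops}, since closed balls of $H^{-t}$ are relatively compact in $H^{-s}$. This will simultaneously yield tightness of $\{\widetilde\phi_N\}$ and of the rescaled sequence $\{\sqrt N\widetilde\phi_N\}$ needed later.

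To estimate $\mathbb{E}[\|\sqrt N\widetilde\phi_N\|_{-t}^2]$ I would expand in the eigenbasis $(e_j)_{j\ge 2}$ of $-\Delta_M$:
\begin{equation*}
\mathbb{E}\bigl[\|\sqrt N\widetilde\phi_N\|_{-t}^2\bigr]=N\sum_{j\ge 2}\lambda_j^{-t}\mathbb{E}\bigl[\langle\widetilde\phi_N,e_j\rangle^2\bigr].
\end{equation*}
Using the identity~\eqref{eq:tilde_useful}, $\langle\widetilde\phi_N,e_j\rangle=\langle\phi_N,\widetilde{(e_j)}_N\rangle=N^{-1}\sum_i \widetilde{(e_j)}_N(p_i)\phi_N(p_i)$, and the fact that the covariance of $\phi_N$ is $G_N$, the inner expectation becomes $N^{-2}(\widetilde{(e_j)}_N,G_N\widetilde{(e_j)}_N)$. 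Since $G_N$ annihilates constants and is bounded by $(\lambda_2^N)^{-1}$ on the orthogonal complement, Assumption~\ref{as:gap} gives a uniform bound $G_N\preceq \delta^{-1}\mathrm{id}$ on $\{f\perp\mathbf 1\}$ with $\delta:=\inf_N\lambda_2^N>0$, whence
\begin{equation*}
\mathbb{E}\bigl[\langle\widetilde\phi_N,e_j\rangle^2\bigr]\le \frac{\delta^{-1}}{N^2}\|\widetilde{(e_j)}_N\|_{2,N}^2\le \frac{\delta^{-1}}{N}\|e_j\|_\infty^2,
\end{equation*}
where the last step uses the elementary pointwise bound $|\widetilde{(e_j)}_N(p_i)|\le \|e_j\|_\infty$ coming from the Voronoi averaging. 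Plugging back gives
\begin{equation*}
\mathbb{E}\bigl[\|\sqrt N\widetilde\phi_N\|_{-t}^2\bigr]\le \delta^{-1}\sum_{j\ge 2}\lambda_j^{-t}\|e_j\|_\infty^2.
\end{equation*}

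At this point I would invoke two classical facts on compact Riemannian manifolds: H\"ormander's sup-norm bound $\|e_j\|_\infty\le C\lambda_j^{(d-1)/4}$ and Weyl's asymptotics $\lambda_j\asymp j^{2/d}$. Combining them, the general term is bounded by a constant times $j^{(-2t+d-1)/d}$, which is summable exactly when $t>d-1/2$. Choosing $t\in(d-1/2,s)$, the uniform bound $\mathbb{E}[\|\sqrt N\widetilde\phi_N\|_{-t}^2]\le C$ then gives, for any $R>0$, $\mathbb{P}(\|\sqrt N\widetilde\phi_N\|_{-t}>R)\le C/R^2$; since the ball $\{\psi:\|\psi\|_{-t}\le R\}$ is compact in $H^{-s}$ by Rellich, tightness in $H^{-s}$ follows.

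\textbf{Main obstacle.} The only nontrivial ingredient is matching the threshold $s>d-1/2$ to the summability of the spectral sum; this is precisely the exponent dictated by H\"ormander's eigenfunction bound and cannot be relaxed without more refined information on the $L^\infty$-growth of eigenfunctions. The Voronoi averaging itself is benign because crude $L^\infty$ control on $\widetilde{(e_j)}_N$ already suffices, and the spectral-gap hypothesis is exactly what is needed to estimate $G_N$ uniformly in $N$.
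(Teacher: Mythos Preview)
Your proposal is correct and follows essentially the same argument as the paper: expand $\mathbb{E}\|\sqrt N\widetilde\phi_N\|_{-t}^2$ in the Laplace--Beltrami eigenbasis, bound each term via the spectral-gap estimate $\|G_N\|\le(\lambda_2^N)^{-1}$ together with the crude Voronoi bound $N^{-1}\|\widetilde{(e_j)}_N\|_{2,N}^2\le\|e_j\|_\infty^2$, then invoke the H\"ormander eigenfunction bound and Weyl asymptotics to obtain summability for $t>d-1/2$, and finally upgrade to tightness in $H^{-s}$ using Rellich's theorem with an intermediate exponent. The only cosmetic difference is that the paper first proves the norm bound for a generic exponent and then picks $s'\in(d-1/2,s)$, whereas you fix $t$ in that range from the start.
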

\begin{proof}
We will first prove that for $s>d-1/2$ and for every $\epsilon>0$, there exists $R=R(\epsilon)>0$ such that for all $N$
\begin{equation}\label{tightness}
    \p(\|\sqrt N\widetilde \phi_N\|^2_{-s}>R)\leq \epsilon.
\end{equation}
First of all by Chebyshev's inequality
\begin{eqnarray*}
    \p(\|\sqrt N\widetilde \phi_N\|^2_{-s}>R)\leq \frac{1}{R} \E(\|\sqrt N\widetilde \phi_N\|^2_{-s}).
\end{eqnarray*}
It suffices then to show that $\E(\|\sqrt N\widetilde \phi_N\|^2_{-k})$ is bounded by some constant.
We write
\begin{eqnarray*}
    \E\left(\sum_{j=2}^\infty \lambda_j^{-s} \langle \sqrt N \widetilde \phi_N,e_j\rangle^2\right) = \sum_{j=2}^\infty \lambda_j^{-s} \E\left(\langle \sqrt N \widetilde \phi_N,e_j\rangle^2\right).
\end{eqnarray*}
Now note that for any $h\in W$
\begin{equation}\label{eq:bound_Bangalore}
    \E\left(\langle \sqrt N\widetilde \phi_N,h\rangle^2\right) \stackrel{\eqref{eq:tilde_useful}}{=} \E\left(\langle \sqrt N \phi_N,\widetilde h_N\rangle^2\right) = \frac{1}{N}(\widetilde h_N,G_N\widetilde h_N)\leq \frac{1}{N} \|\widetilde h_N\|^2 \|G_N\|
\end{equation}
\added[id=ale]{where $\|G_N\|$ is the operator norm of $G_N$ from $\ell^2(V_N)$ to itself and $\|\widetilde h_N\|$ is the $\ell^2(V_N)$-norm.}
Since $\|G_N\|= (\lambda_2^N)^{-1}$ by Assumption~\ref{as:gap} we can bound it by some constant independent of $N$. Moreover
\begin{equation*}
    \frac{1}{N} \|\widetilde h_N\|^2 = \frac{1}{N}\sum_{i=1}^N \left(\frac{1}{v_i}\int_{C_i}h(p)\overline V(\dd p)\right)^2\leq \|h\|^2_\infty.
\end{equation*}
Now by~\citet[Theorem 82]{canzani2013analysis} $\|e_j\|_\infty\leq C\lambda_j^{(d-1)/4}$, \added[id=ale]{so applying the previous argument to the bound~\eqref{eq:bound_Bangalore} with $h:=e_j$ we see that} 
\begin{equation*}
    \sum_{j=1}^\infty \lambda_j^{-s} \E\left(\langle \sqrt N \widetilde \phi_N,e_j\rangle^2\right)
    \leq \sum_{j=1}^\infty \lambda_j^{-s} \|G_N\| \|e_j\|_\infty^2 \leq C\sum_{j=1}^\infty \lambda_j^{(d-1)/2-s}.
\end{equation*}
\citet[Theorem 72]{canzani2013analysis} states Weyl's lemma with the asymptotic $\lambda_j\sim C j^{2/d}$ as $j\rightarrow \infty$, which shows that 
\begin{equation*}
    C\sum_{j=1}^\infty \lambda_j^{(d-1)/2-s} \leq C \sum_{j=1}^\infty j^{2/d((d-1)/2-s)}.
\end{equation*}
This series is bounded as long as $2/d((d-1)/2-s) < -1$, so for $s>d-1/2$. This means we have shown~\eqref{tightness}.

To conclude the argument, fix $s>d-1/2$. Let $s'$ be such that $s>s'>d-1/2$ and let $\epsilon>0$. We know there exists $R>0$ such that~\eqref{tightness} holds, i.e. for all $N$
\begin{equation*}
    \p(\widetilde \phi_N \notin \overline{B_{-s'}(0,R)}) \leq \epsilon,
\end{equation*}
where $\overline {B_{-s'}(0,R)}$ is the closed ball with radius $R$ in $H^{-s'}$. Now by Theorem~\ref{sobolevprops}, we see that $\overline{B_{-s'}(0,R)}$ is compact in $H^{-s}$ (since $s>s'$), so we have shown tightness in $H^{-s}$.
\end{proof}

\subsection{Convergence of finite dimensional distributions}\label{subsec:fdd_vor}
As mentioned before, we need to show that for all $f,\,g\in H^1$
\begin{equation*}
    \E \left(\langle \sqrt N\widetilde \phi_N, f\rangle\langle \sqrt N\widetilde \phi_N, g\rangle\right) \to (f,\,Gg).
\end{equation*}
Since $W$ is dense in $H^{1}$ and by a polarization argument, it suffices to show the following.
\begin{prop}\label{prop:marginals}
For all $f\in W$
\begin{equation*}
    \E \left\langle\sqrt N \widetilde \phi_N,f\right\rangle ^2 \rightarrow (f,\,Gf).
\end{equation*}
\end{prop}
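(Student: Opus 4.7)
The strategy is to reduce the computation to the one already carried out in the proof of Theorem~\ref{thm:1} by showing that the Voronoi averaging introduces an asymptotically negligible error. By~\eqref{eq:tilde_useful} and Definition~\ref{defi:multivariate},
\[
\E \left\langle\sqrt N \widetilde \phi_N,f\right\rangle^2 = N\,\E\langle \phi_N,\widetilde f_N\rangle^2 = \frac{1}{N}(\widetilde f_N,\,G_N\widetilde f_N).
\]
Since $G_N\1=0$, we may replace $\widetilde f_N$ by its zero-averaged version $\widetilde f_N^{\,0}:=\widetilde f_N-N^{-1}\sum_{i=1}^N\widetilde f_N(p_i)$ on $V_N$ without changing the quadratic form. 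The proof of Theorem~\ref{thm:1} establishes $\frac{1}{N}(f_N,G_Nf_N)\to (f,Gf)$, so it suffices to prove
\[
\lim_{N\to\infty}\frac{1}{N}\bigl|(\widetilde f_N^{\,0},G_N\widetilde f_N^{\,0})-(f_N,G_Nf_N)\bigr|=0.
\]

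Setting $e_N:=\widetilde f_N^{\,0}-f_N$ and using symmetry of $G_N$, the difference expands as $2(f_N,G_Ne_N)+(e_N,G_Ne_N)$. Using the operator-norm bound $\|G_N\|_{\mathrm{op}}\le (\lambda_2^N)^{-1}\le 1/\delta$ (guaranteed by Assumption~\ref{as:gap}) and Cauchy--Schwarz, this reduces to showing that $N^{-1}\|f_N\|_{2,N}^2$ stays bounded while $N^{-1}\|e_N\|_{2,N}^2\to 0$. The boundedness of the first quantity follows from Assumption~\ref{as:empirical}, which yields $N^{-1}\|f|_{V_N}\|_{2,N}^2\to\|f\|_2^2$.

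For the error term, I would use that for smooth $f$ on the compact manifold $M$ the Lipschitz constant $L_f$ is finite, hence
\[
|\widetilde f_N(p_i)-f(p_i)|=\left|\frac{1}{v_i}\int_{C_i}(f(p)-f(p_i))\overline V(\dd p)\right|\le L_f\,\mathrm{diam}(C_i)\le 2L_f h_N,
\]
where $h_N:=\sup_{p\in M}\min_{j}d(p,p_j)$ is the fill radius. The same bound propagates to $e_N$ after absorbing the difference of discrete averages via Cauchy--Schwarz, giving $N^{-1}\|e_N\|_{2,N}^2\le C L_f^2 h_N^2$. Combining all of this with the convergence from Theorem~\ref{thm:1} finishes the proof.

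The main obstacle is showing $h_N\to 0$, which must be extracted solely from Assumption~\ref{as:empirical} since Theorem~\ref{thm:vor} only assumes the hypotheses of Theorem~\ref{thm:1}. For this, I would argue that since $\overline V$ assigns positive mass to every open ball in the compact manifold $M$, the Portmanteau theorem gives $\liminf_N\mu^N(B)\ge\overline V(B)>0$ for any open ball $B$; a finite cover of $M$ by balls of radius $\epsilon/2$ thus eventually contains a grid point in each ball, so $h_N\le\epsilon$ for $N$ large. Letting $\epsilon\downarrow 0$ yields $h_N\to 0$ and completes the argument.
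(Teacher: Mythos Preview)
Your proposal is correct and follows essentially the same route as the paper: reduce to the quadratic form $\frac{1}{N}(\widetilde f_N,G_N\widetilde f_N)$, compare it with $\frac{1}{N}(f_N,G_Nf_N)$ via Cauchy--Schwarz and the spectral-gap bound $\|G_N\|\le(\lambda_2^N)^{-1}$, and control the Voronoi error by $L_f$ times the fill radius. Your quantity $h_N$ coincides with the paper's $\epsilon_N:=\sup_i\sup_{p\in C_i}d(p,p_i)$, and your Portmanteau/finite-cover argument for $h_N\to 0$ is exactly the content of the paper's Lemma~\ref{epsilonlemma}.
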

Before we move on to the proof, we prove the following technical lemma.
\begin{lemma}\label{epsilonlemma}
Define
\begin{equation*}
    \epsilon_N := \sup_{1\leq i\leq N} \sup_{p\in C_i^N} d(p,p_i).
\end{equation*}
Then $\epsilon_N$ goes to $0$ as $N\rightarrow \infty$.
\end{lemma}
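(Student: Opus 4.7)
The plan is to recognize $\epsilon_N$ as the fill distance (covering radius) of the sample points and then apply a standard compactness plus Borel--Cantelli argument.

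First I would rewrite $\epsilon_N$ in a form that does not reference the Voronoi cells directly. By the very definition of $C_i^N$, a point $p$ lies in $C_i^N$ iff $p_i$ is (one of) the sample points nearest to $p$. Hence $\sup_{p\in C_i^N} d(p,p_i) = \sup_{p\in C_i^N}\min_{1\le j\le N} d(p,p_j)$, and since the cells cover $M$,
\[
\epsilon_N = \sup_{p\in M}\min_{1\le i\le N} d(p,p_i).
\]

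Next I would fix $\epsilon>0$ and exploit compactness of $M$ to choose a finite open cover by geodesic balls $B(q_1,\epsilon/2),\ldots,B(q_{K},\epsilon/2)$. The quantitative input is that each such ball carries positive volume, so
\[
\delta := \min_{1\le k\le K}\overline V\bigl(B(q_k,\epsilon/2)\bigr) > 0.
\]
If every ball of the cover contains at least one of the points $p_i$, then for an arbitrary $p\in M$ we can pick $q_k$ with $p\in B(q_k,\epsilon/2)$ and a $p_i\in B(q_k,\epsilon/2)$, and the triangle inequality yields $d(p,p_i)<\epsilon$. Taking the supremum over $p$ gives $\epsilon_N<\epsilon$.

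Finally, since the $p_i^N$ are i.i.d. samples from $\overline V$, the probability that a prescribed ball contains no sample point is $(1-\overline V(B(q_k,\epsilon/2)))^N\le (1-\delta)^N$, and a union bound gives
\[
\p(\epsilon_N\ge \epsilon)\le K(1-\delta)^N,
\]
which is summable in $N$. Borel--Cantelli then yields $\limsup_N \epsilon_N\le \epsilon$ almost surely; intersecting the probability-one events for $\epsilon=1/m$, $m\in\N$, gives $\epsilon_N\to 0$ almost surely, consistent with the quenched framework of Remark~\ref{rem:almostsure}. There is no substantial obstacle here; the only point requiring a (minor) verification is the uniform positivity of the volumes of the balls in the cover, which is immediate from compactness of $M$ together with the fact that small geodesic balls on a Riemannian manifold have positive volume.
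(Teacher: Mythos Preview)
Your argument is internally correct, but it proves the lemma under the wrong hypotheses. Lemma~\ref{epsilonlemma} sits in Section~\ref{subsec:vor}, where the standing assumptions are those of Theorem~\ref{thm:1}; in particular the grids $(V_N)_N$ are \emph{deterministic} sequences satisfying Assumption~\ref{as:empirical} (weak convergence of the empirical measures), and nothing more. The i.i.d.\ sampling and the quenched framework of Remark~\ref{rem:almostsure} belong to the proof of Theorem~\ref{thm:2}, not here. Your Borel--Cantelli step therefore has no footing: there is no probability space on which to compute $\p(\epsilon_N\ge\epsilon)$, and the grids need not arise from any random mechanism at all.

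The paper's proof uses exactly the covering idea you set up (finitely many balls $B(q_k,\delta/2)$ by compactness), but replaces your probabilistic input by Assumption~\ref{as:empirical}: for a fixed ball $B$, take a nonnegative continuous $f$ supported in $B$ with $\int f\,\dd\overline V>0$; weak convergence of $\mu^N$ forces $\int f\,\dd\mu^N>0$ eventually, hence $B$ contains a grid point for all large $N$. Applying this to each ball of the finite cover yields the same conclusion you reach, now under the correct (deterministic) hypothesis. If you swap your third and fourth paragraphs for this test-function argument, your proof goes through and in fact matches the paper's.
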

\begin{proof}
To derive a contradiction, suppose that $\epsilon_N$ does not go to $0$. This means that there is some $\delta>0$ such that $\epsilon_N>2\delta$ for infinitely many $N$. Consequently for each such $N$, there exists $1\le i\leq N$ and $p\in C_i$ such that $d(p,\,p_i)\geq \delta$. Since $p\in C_i$, $p_i$ is the nearest grid point to it. This implies that $B(p,\delta)$ does not contain any grid points. We conclude from this that
\begin{enumerate}[leftmargin=*,label=(\roman*),ref=(\roman*)]
    \item\label{item:st}for infinitely many $N\in\mathbb{N}$ there must be a ball with radius $\delta$ that does not contain a grid point of $V_N$.
\end{enumerate} 

Now fix $p\in M$ and $r>0$ and suppose that $B(p,r)$ does not contain grid points of $V_N$ for infinitely many $N$. Now fix some positive non-zero continuous function $f$ which has support contained in $B(p,r)$. Then $\int f\dd \mu_N=0$ for infinitely many $N$, but $\int f\dd \overline V>0$. However, by assumption~\ref{as:empirical},
\begin{equation*}
    \int f\dd \mu_N \rightarrow \int f \dd \overline V\quad (N\rightarrow\infty).
\end{equation*}
This is a contradiction. We conclude that 
\begin{enumerate}[leftmargin=*,label=(\roman*),ref=(\roman*)]\setcounter{enumi}{1}
    \item\label{item:stst} for every fixed ball $B$ in $M$ there exists an $N_0$ such that $B$ contains grid points of $V_N$ for every $N\geq N_0$.

\end{enumerate}

To finish the argument let $B(q_1,\,\delta/2), B(q_2,\,\delta/2),\,\ldots,\,B(q_m,\,\delta/2)$ be a finite number of balls of radius $\delta/2$ that cover $M$. By~\ref{item:stst}, each of these balls will eventually contain a grid point. This means that there exists an $N_0$ such that for all $N\geq N_0$ each of these balls contains a grid point of $V_N$. Now let $N\geq N_0$ and let $p$ be any point of the manifold. Since $p$ is at distance less than $\delta/2$ from some $q_i$ and there is a grid point of $V_N$ at distance less than $\delta/2$ from $q_i$, it follows that $B(p,\delta)$ contains at least one grid point of $V_N$. This implies that every ball of radius $p$ contains at least one grid point of $V_N$, which contradicts~\ref{item:st}.
\end{proof}
\begin{proof}[Proof of Proposition~\ref{prop:marginals}]
First of all
\begin{equation*}
    \E \left\langle\sqrt N \widetilde \phi_N,f\right\rangle ^2 \stackrel{\eqref{eq:tilde_useful}}{=} \E \langle\sqrt N \phi_N,\widetilde f_N \rangle ^2
    = \frac{1}{N}(\widetilde f_N, G_N \widetilde f_N).
\end{equation*}
Recall the notations $f|_N=f|_{V_N}$, $\bar f^N={1}/{N}\sum_{i=1}^N f(p_i)$ and $f_N=f|_N-\bar f^N$. We have shown in Section~\ref{subsec:thm1} that
\begin{equation*}
    \frac{1}{N}(f|_N, G_N f|_N)\rightarrow (f,\,Gf)
\end{equation*}
(actually we have shown this for $f_N$, but since $G$ maps constant vectors to $0$ this does not make a difference). Hence by the triangular inequality it suffices to show that 
\begin{equation*}
    \left|\frac{1}{N}(\widetilde f_N, G_N \widetilde f_N)-\frac{1}{N}(f|_N, G_N f|_N)\right|\rightarrow 0.
\end{equation*}
By linearity and Cauchy--Schwarz, we see that
\begin{align}
    \left|\frac{1}{N}(f|_N, G_N f|_N)\right.&\left.-\frac{1}{N}(\widetilde f_N, G_N \widetilde f_N)\right|\leq 
    \frac{1}{N}\left( |(f|_N-\widetilde f_N,G_N f|_N)|+|(\widetilde f_N, G_N(f|_N-\widetilde f_N))|\right)\nonumber \\ 
    &\leq \frac{1}{N} \|f|_N-\widetilde f_N\| \|G_N\| \|f|_N\| + \frac{1}{N}\|\widetilde f_N\|\|G_N\| \|f|_N-\widetilde f_N\|.\label{difference}
\end{align}
Now we see that
\begin{equation*}
    \left(\frac{1}{\sqrt N} \|f|_N\|\right)^2 = \frac{1}{N}\sum_{i=1}^N f(p_i)^2 \leq \|f\|_{L^\infty}^2
\end{equation*}
and 
\begin{equation*}
    \left(\frac{1}{\sqrt N} \|\widetilde f_N\|\right)^2 = \frac{1}{N}\sum_{i=1}^N \left(\frac{1}{v_i}\int_{C_i}f(p)\overline V(\dd p)\right)^2 \leq \frac{1}{N}\sum_{i=1}^N \|f\|_{L^\infty}^2 = \|f\|_{L^\infty}^2.
\end{equation*}
Also $\|G_N\| = ({\lambda_{2}^N})^{-1}$. Further, we see that for all $p\in C_i$,
\begin{equation*}
    f(p_i) - L_f\epsilon_N \leq f(p_i)- L_f d(p,p_i)\leq f(p)\leq f(p_i)+ L_f d(p,p_i) \leq f(p_i) + L_f\epsilon_N,
\end{equation*}
which implies that 
\begin{equation*}
    \left| f(p_i)-\frac{1}{v_i}\int_{C_i}f(p)\overline V(\dd p)\right|\leq \frac{1}{v_i}\int_{C_i}|f(p_i)-f(p)|\overline V(\dd p) \leq L_f \epsilon_N.
\end{equation*}
Now we see that 
\begin{equation*}
    \left(\frac{1}{\sqrt N}\|f|_N-\widetilde f_N\|\right)^2 = \frac{1}{N}\sum_{i=1}^N \left( f(p_i)-\frac{1}{v_i}\int_{C_i}f(p)\overline V(\dd p)\right)^2\leq \frac{1}{N}\sum_{i=1}^N L_f^2 \epsilon_N^2 = L_f^2 \epsilon_N^2,
\end{equation*}
which goes to $0$ as $N\rightarrow\infty$.
Putting everything together, we deduce that~\eqref{difference} is bounded by
\begin{equation*}
    L_f\epsilon_N\frac{1}{\lambda_{2}^N} \|f\|_\infty + \|f\|_\infty \frac{1}{\lambda_{2}^N} L_f\epsilon_N=  \frac{2\epsilon_N L_f\|f\|_{L^\infty}}{\lambda_{2}^N},
\end{equation*}
which goes to $0$ as $N\rightarrow\infty$, since by lemma~\ref{epsilonlemma} $\epsilon_N\rightarrow 0$ and by Assumption~\ref{as:gap} $\inf_N\lambda_{2}^N>0$.
\end{proof}


\bibliographystyle{abbrvnat}
\bibliography{refs}

\end{document}